\definecolor{dblue}{rgb}{0,0,0.70}
\newtheorem{theorem}
{Theorem}[section]
\newtheorem*{theorem*}{Theorem}
\newaliascnt{lemma}{theorem}
\newtheorem{lemma}[lemma]{Lemma}
\newtheorem*{lemma*}{Lemma}
\newaliascnt{fact}{theorem}
\newaliascnt{proposition}{theorem}
\newtheorem{proposition}[proposition]{Proposition}
\newtheorem*{proposition*}{Proposition}
\newaliascnt{corollary}{theorem}
\newtheorem{corollary}[corollary]{Corollary}
\theoremstyle{remark}
\newaliascnt{remark}{theorem}
\newtheorem{remark}[remark]{Remark}
\newaliascnt{question}{theorem}
\newaliascnt{conjecture}{theorem}
\newtheorem{conjecture}[conjecture]{Conjecture}
\newtheorem*{question*}{Question}
\newaliascnt{definition}{theorem}
\newtheorem{definition}[definition]{Definition}
\newaliascnt{example}{theorem}
\newcommand{\axiom}[1]{\mathsf{#1}}
\newcommand{\ZFC}{\axiom{ZFC}}
\newcommand{\AC}{\axiom{AC}}
\newcommand{\ZF}{\axiom{ZF}}
\newcommand{\Ord}{\mathrm{Ord}}
\newcommand{\KWP}{\axiom{KWP}}
\newcommand{\SVC}{\axiom{SVC}}
\DeclareMathOperator{\tcl}{tcl}
\DeclareMathOperator{\Def}{Def}
\newcommand{\forces}{\mathrel{\Vdash}}
\newcommand{\power}{\mathcal{P}}
\newcommand{\PP}{\mathbb P}
\newcommand{\QQ}{\mathbb Q}
\newcommand{\tup}[1]{\langle#1\rangle}
\author{Asaf Karagila}
\author{Jonathan Schilhan}
\email{karagila@math.huji.ac.il}
\urladdr{https://karagila.org}
\email{jonathan.schilhan@univie.ac.at}
\urladdr{https://www.logic.univie.ac.at/~jschilhan/}
\address{School of Mathematics,
    University of Leeds.
    Leeds, LS2~9JT, UK}
\date{10 July, 2025}
\subjclass[2020]{Primary 03E25; Secondary 03E35, 03E40}
\keywords{axiom of choice, Kinna--Wagner Principles, intermediate models}
\title{Intermediate models and Kinna--Wagner Principles}
\begin{document}
\begin{abstract}
  Kinna--Wagner Principles state that every set can be mapped into some fixed iterated power set of an ordinal, and we write $\KWP$ to denote that there is some $\alpha$ for which this holds. The Kinna--Wagner Conjecture, formulated by the first author in \cite{Karagila:2020}, states that if $V$ is a model of $\ZF+\KWP$ and $G$ is a $V$-generic filter, then whenever $W$ is an intermediate model of $\ZF$, that is $V\subseteq W\subseteq V[G]$, then $W=V(x)$ for some $x$ if and only if $W$ satisfies $\KWP$. In this work we prove the conjecture and generalise it even further. We include a brief historical overview of Kinna--Wagner Principles and new results about Kinna--Wagner Principles in the multiverse of sets.
\end{abstract}
 \maketitle              
\section{Introduction}
The Axiom of Choice was formulated by Zermelo in order to prove that every set can be well-ordered, and we know that the converse statement holds as well. Namely, if we assume that every set can be well-ordered, then the Axiom of Choice holds. Since every well-ordered set is isomorphic to a unique ordinal, we can phrase the Axiom of Choice as ``Every set injects into an ordinal''.

Kinna and Wagner formulated a weakening of the Axiom of Choice in \cite{KWoriginal}. Formulated as a selection principle, they prove that it is equivalent to ``Every set injects into the power set of an ordinal''. So, an immediate consequence of this principle is that every set can be linearly ordered. The Kinna--Wagner selection principle was studied extensively, both by looking at various weaker versions (mostly derived from the formulation of the selection principle, see \cite{HRR:KW} for example), as well as studying its independence and consequences in the broader context of choice principles which imply linear ordering, see \cite{Pincus:dense} for example).

In this paper we look at a different weakening of the Kinna--Wagner Principle, derived from the concept of ``Every set injects into the $n$th-power set of an ordinal''. This generalisation is originally due to Monro \cite{Monro:KW}, later extended to the transfinite hierarchy by the first author in \cite{Karagila:Bristol,Karagila:Sym}. Kinna--Wagner Principles are fairly stable under generic extensions, but can be violated using symmetric extensions to a certain degree. The \emph{Kinna--Wagner Conjecture} states, informally, that if $V$ satisfied any Kinna--Wagner Principle, then the intermediate models between $V$ and $V[G]$, a set-generic extension of $V$, which satisfy some kind of Kinna--Wagner Principle are exactly of the form $V(x)$.

The Kinna--Wagner Conjecture can be understood as a generalisation of the intermediate model theorem, which states that if $V\subseteq M\subseteq V[G]$ are models of $\ZFC$, where $G$ is a $V$-generic filter, then $M$ is a generic extension of $V$ and $V[G]$ is a generic extension of $M$. This is not true if $M$ does not satisfy the Axiom of Choice, as generic extensions preserve the Axiom of Choice. But a natural question is whether $M$ must be a symmetric extension of $V$, at least in the presence of the Axiom of Choice, or some Kinna--Wagner Principle.

Grigorieff \cite{Grigorieff:Intermediate} studied intermediate models of $\ZF$, mainly in the form of symmetric extensions, even without the assumption of $\AC$ in $V$ or $V[G]$. The work on the Bristol model in \cite{Karagila:Bristol,Karagila:2020} showed that if $c$ is a Cohen real over $L$, then the structure of intermediate models is far more complex than previously imagined. Not every intermediate model has the form $L(x)$ for any set $x$, and even those that do, are not necessarily symmetric extension given by the Cohen real itself. These results were recently extended by Hayut and Shani \cite{HayutShani}, where they show that given any model of $\ZF$, $V$, if $c$ is a Cohen real over $V$, then there is an intermediate $V\subseteq M\subseteq V[c]$ such that $M\neq V(x)$ for all $x$. Hayut and Shani show that in these intermediate models all Kinna--Wagner Principles fail.

We show in the final section of this work that more can be said. \autoref{thm:KWC} shows that if $V\subseteq M\subseteq V[G]$ are models of $\ZF$, with $G$ a $V$-generic filter, and some Kinna--Wagner Principle holds in $V$, then $M$ has the form $V(x)$ for some set $x$ if and only if it satisfies some Kinna--Wagner Principle, and that the set $x$ is uniformly defined from the Kinna--Wagner Principle. The notion of a Kinna--Wagner Principle is generalised to allow different ``starting classes'' than the ordinals, and consequently we obtain \autoref{thm:general-form}.

\section{Preliminaries}
We will use a superscript with $M$ to denote the relativisation of classes to $M$, e.g.\ $\power(x)^M$ would be the power set of $x$ as computed in $M$. The exception would be the von Neumann hierarchy of $M$, which we denote by $M_\alpha$ rather than $V_\alpha^M$. All our ``models of $\ZF$'' are class models, they are all transitive classes in the context of any larger model. We will also write $\power(X)$, even if $X$ is a proper class, to mean $\{x\mid x\subseteq X\}$, but in the case where $X$ is a transitive class\footnote{Which will be the typical case.} this is tantamount to $\bigcup\{\power(x)\mid x\in X\}$.

Suppose that $x$ is a set, we define $L(x)$ to be the smallest transitive model of $\ZF$ containing the ordinals and $x$ itself. This model has a constructible hierarchy, much like $L$ does, with the modification that $L_0(x)=\tcl(\{x\})$, rather that $\varnothing$. We write $L(x,y)$ to mean $L(\{x,y\})$.

We can further extend this notation and define for an inner model $V$ and a set $x$, the class $V(x)$ to be the smallest transitive model of $\ZF$ containing $V$ and having $x$ as an element. The following equalities hold: \[V(x)=\bigcup\{L(x,y)\mid y\in V\}=\bigcup\{L(V_\alpha,x)\mid\alpha\in\Ord\}=\bigcup\{L_\alpha(V_\alpha,x)\mid\alpha\in\Ord\}.\]

Often, especially in the context of $\ZFC$, we take $V$ to be some definable inner model, however, this can be defined in any second-order set theory such as G\"odel--Bernays set theory, for any inner model, or by adding $V$ as a predicate when possible (e.g., when $V$ is a ground of the universe).

The definition of $V(x)$ can be also extended to proper classes, although this requires a bit more care. Given $V\subseteq W$ and $X\subseteq W$, we define \[V(X)=\bigcup\{V(X\cap W_\alpha)\mid\alpha\in\Ord\}.\] This definition makes the most sense when $W$ is a model of a second-order set theory and both $V$ and $X$ are proper classes of $W$, but this can be easily understood in general.

Finally, we will use the shorthand notation $x\leq y$ to mean that there exists an injection from the set $x$ into the set $y$. We use the $x\leq^*y$ notation to denote that either $x$ is empty or that there is a surjection from $y$ onto $x$, or alternatively, that there is a partial function from $y$ onto $x$.

\subsection{Generic and symmetric extensions}
\emph{Generic extensions} are given by forcing. We will hardly use the mechanics of forcing and we refer the reader to standard sources, e.g.\ Jech \cite{Jech:ST}, for a good review of the technique. While generic extensions are incredibly useful in the study of $\ZFC$, they preserve the Axiom of Choice from the ground model, and as such are not quite the needed tool for proving independence results related to the Axiom of Choice.

\emph{Symmetric extensions} are formed by identifying an intermediate model between $V$ and $V[G]$. We will not be using the technique directly, but we refer the readers to \cite{KS:Dist} for an overview of the technique, along with several interesting\footnote{In the humble view of the authors, at least.} results on the technique, as well as the limitations of forcing in $\ZF$.

One of the most important choice principles related to symmetric extensions was given by Blass \cite{Blass:1979}, where he suggests that the failure of choice is, in a way, due to ``one bad apple'', or ``small''.

\begin{definition}[Small Violations of Choice]
  We say that $V\models\SVC(X)$ if for every set $A$ there is an ordinal $\eta$ and a surjection $f\colon X\times\eta\to A$. We write $V\models\SVC$ to mean that there exists some $X$ such that $\SVC(X)$ holds.
\end{definition}
We can define an injective version of $\SVC$, namely, requiring that $A$ injects into $X\times\eta$. However, the two notions are equivalent when working in $\ZF$, as the surjection from $X\times\eta$ onto $A$ translates to an injection from $A$ into $\power(X\times\eta)$, which can be then modified to an injection into $\power(A)\times\eta$. For a brief discussion on this subject, see \cite{Ryan-Smith:2024}.

Small Violation of Choice turned out to be an incredibly useful choice principle. Through a combination of results due to Blass \cite{Blass:1979} and Usuba \cite{Usuba:2021} the following theorem holds.
\begin{theorem}\label{thm:BlassUsuba}
  The following are equivalent for $W\models\ZF$.
  \begin{enumerate}
  \item $W\models\SVC$.
  \item The Axiom of Choice can be forced over $W$.
  \item $W$ is a symmetric extension of a model of $\ZFC$.
  \item $W=V(x)$ where $V\models\ZFC$ and $x$ is a set.\qed
  \end{enumerate}
\end{theorem}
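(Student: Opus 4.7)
The plan is to close the cycle $(4) \Rightarrow (1) \Rightarrow (2) \Rightarrow (3) \Rightarrow (4)$, reading off the equivalence of (1)--(3) from Blass's work and completing the loop with Usuba's ingredient.

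For $(4) \Rightarrow (1)$, I would argue directly from the identity $V(x)=\bigcup_\alpha L_\alpha(V_\alpha,x)$ recalled in the preliminaries, taking $X=\tcl(\{x\})$. Given $A\in W$, pick $\alpha$ with $A\in L_\alpha(V_\alpha,x)$. Since $V\models\ZFC$, there is an ordinal $\eta$ and a bijection $\eta\to V_\alpha$ in $V$. Each element of $L_\alpha(V_\alpha,x)$ is produced at some level $\beta<\alpha$ by a formula with parameters from $V_\alpha\cup X$, so encoding (level, formula, finite parameter tuple) manufactures a surjection $X\times\eta'\to L_\alpha(V_\alpha,x)$ for a suitable ordinal $\eta'$ built from $\eta$ and $\alpha$, witnessing $\SVC(X)$.

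The implication $(1) \Rightarrow (2)$ is Blass's classical observation: if $\SVC(X)$ holds, force with $\Col(\omega,X)$ to obtain a surjection $\omega\to X$; composing with the surjections $X\times\eta\to A$ exhibits every set in the extension as the surjective image of an ordinal, which in $\ZF$ is well-orderable, so the extension models $\AC$. For $(2) \Rightarrow (3)$ I would take a $W$-forcing $\PP$ that forces $\AC$, pass to $W[G]\models\ZFC$, and realise $W$ as a symmetric extension of a suitable $\ZFC$-ground inside $W[G]$ using the automorphism group of the complete Boolean algebra associated to $\PP$ and the normal filter of pointwise stabilisers of initial segments of names — this is Blass's ``every $\SVC$-model is a symmetric extension'' construction.

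The hard step, and the main obstacle, is $(3) \Rightarrow (4)$: given a symmetric system $(\PP,\cG,\cF)$ over $V\models\ZFC$ with hereditarily symmetric class $\HS$ and symmetric extension $W$, we must produce a \emph{single} set $x$ so that $W=V(x)$. The natural candidate is the evaluation of a canonical $\HS$-name that simultaneously records the orbits of $G$ under a cofinal family of subgroups in $\cF$, so that every $\dot\tau^G$ becomes definable from $V$ and $x$. The nontrivial direction is $W\subseteq V(x)$: one must verify that, no matter how intricate an $\HS$-name $\dot\tau$ is, its value is already determined by the symmetric data packaged into $x$. This is Usuba's theorem from \cite{Usuba:2021}; the reverse inclusion $V(x)\subseteq W$ is immediate because $x\in W$, $V\subseteq W$, and $W$ is a transitive model of $\ZF$.
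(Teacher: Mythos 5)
The paper does not actually prove this theorem: the statement carries a \verb|\qed| because it is quoted as a combination of known results of Blass and Usuba, so the only thing to check is whether your cycle correctly locates and supports the genuinely hard implication. It does not. The deep content of the theorem is the production of an inner model of $\ZFC$ at all: given only $W\models\SVC$ (equivalently, that $\AC$ is forceable over $W$), one must find $V\subseteq W$ with $V\models\ZFC$ such that $W[G]$ is a set-generic extension of $V$. That is Usuba's theorem from \cite{Usuba:2021} (via the choiceless L\"owenheim--Skolem property), and it is exactly what your step $(2)\Rightarrow(3)$ silently assumes when you speak of ``a suitable $\ZFC$-ground inside $W[G]$'': Blass's paper contains no such construction, and without Usuba there is no reason such a ground exists, let alone one over which $W$ is symmetric. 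Meanwhile the step you flag as the hard one, $(3)\Rightarrow(4)$, is comparatively soft: if $W$ is a symmetric extension of $V\models\ZFC$ then $W$ is an intermediate model $V\subseteq W\subseteq V[G]$, and Grigorieff's theorem (cited in the paper as the equivalence of (1) and (2) of \autoref{thm:general-form}) already identifies the symmetric submodels as exactly the models $V(x)$; no appeal to Usuba is needed there. So the load-bearing ingredient has been attached to the wrong arrow, and the arrow that needs it is left unsupported.

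Two smaller points. In $(4)\Rightarrow(1)$, taking $X=\tcl(\{x\})$ does not quite work: elements of $L_\beta(V_\alpha,x)$ are defined from \emph{finite tuples} of parameters, and $\ZF$ does not provide a surjection from $X\times\eta$ onto $X^{<\omega}$ for arbitrary $X$; take $X=\tcl(\{x\})^{<\omega}$ (or $(\tcl(\{x\})\cup V_\alpha)^{<\omega}$, absorbing $V_\alpha$ into the ordinal coordinate using $\AC$ in $V$) instead. Your $(1)\Rightarrow(2)$ via $\Col(\omega,X)$ is correct and is indeed Blass's argument.
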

\section{Kinna--Wagner Principles}
\subsection{Historical background}
The Axiom of Choice can be phrased as ``For every set $M$, there is a function $f\colon\power(M)\to\power(M)$ such that $f(A)\subseteq A$ for all $A\in\power(M)$, and if $A$ is non-empty, then $f(A)$ is a singleton''. This lends itself to a natural weakening. Instead of selecting a singleton, we merely select a non-empty proper subset when possible. This is the original formulation of the Kinna--Wagner selection principle in \cite{KWoriginal}.

Formally, the \emph{Kinna--Wagner Selection Principle} states that for every set $M$ there is a function $f\colon\power(M)\to\power(M)$ such that $f(A)\subseteq A$ for all $A\in\power(M)$, and if $|A|\geq 2$, then $\varnothing\subsetneq f(A)\subsetneq A$.

In their original paper, Kinna and Wagner prove that $\power(M)$ admits such a function if and only if $M$ itself can be injected into the power set of some well-ordered set. Let us sketch a brief proof of this equivalence.

If $M\subseteq\power(\eta)$ and $|A|\geq 2$, we define $\alpha_A=\min(\bigcup A\setminus\bigcap A)$, note that if $|A|\geq 2$ then this is well-defined. Next define $f(A)=\{a\in A\mid\alpha_A\notin a\}$, or $f(A)=\varnothing$ if $|A|\leq 1$. Then $f(A)$ produces a non-empty, proper subset of $A$ as wanted.

In the other direction, starting from $f\colon\power(M)\to\power(M)$ we can build a tree by recursion, starting from the root as $M$ and at each node, $A$, we have two successors: $f(A)$ and $A\setminus f(A)$. At limit points we take intersections along the branches (this is a copy of $2^{<\alpha}$ for some $\alpha$, so branches do exist), and if they have more than two points, we continue. This recursion must end at some stage $\eta$, which can only happen once all the terminal nodes are singletons and every singleton has been reached. For every $m\in M$ we consider the ``trace'' of $m$ in this tree: the set of all $\alpha<\eta$ such that for some $A$ in the $\alpha$th level of the tree, $m\in f(A)$. Easily this provides us an injection from $M$ into $\power(\eta)$.

Many selection principles were derived from the one above, often by restricting the size of $M$ or requiring that the size of the selected set will have some properties (e.g., \cite{HRR:KW}). Monro, however, generalised these principles based on the embeddibility into iterated power sets, starting from an ordinal in \cite{Monro:KW}, extended further by the first author in \cite{Karagila:Sym}, as well as studied by Shani in \cite{Shani:2021}.

\subsection{Higher-order generalisation}
We define the iterated power set of a set $x$, $\power^\alpha(x)$, by recursion: \begin{enumerate}
\item $\power^0(x)=x$,
\item $\power^{\alpha+1}(x)=\power(\power^\alpha(x))$, and
\item $\power^\alpha(x)=\bigcup\{\power^\beta(x)\mid\beta<\alpha\}$ when $\alpha$ is a limit ordinal.
\end{enumerate}
In the case where $x$ is an infinite set and $x^2\leq x$, we immediately get that $(\power^\alpha(x))^2\leq\power^\alpha(x)$ for all $\alpha$. In the case where $x$ is an infinite ordinal (or $\Ord$ itself) this allows us to encode any binary relation on $\power^\alpha(\Ord)$ as a subset of $\power^\alpha(\Ord)$.
\begin{definition}
  The \textbf{Kinna--Wagner $\boldsymbol\alpha$} principle, denoted by $\KWP_\alpha$, states that every set $x$ injects into $\power^\alpha(\Ord)$. That is, there is an ordinal $\eta$ such that $x$ injects into $\power^\alpha(\eta)$. We omit the index to write $\KWP$ as a shorthand for $\exists\alpha\, \KWP_\alpha$.

  We define also the surjective version, \textbf{Kinna--Wagner* $\boldsymbol\alpha$} principle, denoted by $\KWP_\alpha^*$, which asserts that for every set $x$, there is a partial function from $\power^\alpha(\Ord)$ onto $x$. Namely, there is some ordinal $\eta$ and a partial function from $\power^\alpha(\eta)$ onto $x$. We use $\KWP^*$ to denote $\exists\alpha\, \KWP_\alpha^*$ as well.
\end{definition}
The next proposition is an immediate corollary, following from the fact that for every $x$ and $y$, $x\leq y\to x\leq^* y\to x\leq\power(y)$.
\begin{proposition}
  For all $\alpha$, $\KWP_\alpha\to\KWP^*_\alpha\to\KWP_{\alpha+1}$.\qed
\end{proposition}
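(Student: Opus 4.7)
The plan is to verify the two implications by establishing, for fixed ordinals $\eta$, the containments $\{x \mid x\leq \power^\alpha(\eta)\}\subseteq\{x\mid x\leq^*\power^\alpha(\eta)\}\subseteq\{x\mid x\leq\power^{\alpha+1}(\eta)\}$, and then quantifying over $\eta$.

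For the first implication, I would fix a set $x$ and, using $\KWP_\alpha$, obtain an ordinal $\eta$ and an injection $\iota\colon x\to\power^\alpha(\eta)$. The converse relation $\iota^{-1}$ is then a partial function from $\power^\alpha(\eta)$ whose range is exactly $x$, which is the definition of $x\leq^*\power^\alpha(\eta)$ (with the empty case handled by the convention built into $\leq^*$). Hence $\KWP^*_\alpha$ holds with the same $\eta$.

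For the second implication, given $x$ and a partial surjection $f\colon\power^\alpha(\eta)\to x$ supplied by $\KWP^*_\alpha$, I would consider the map $g\colon x\to\power^{\alpha+1}(\eta)$ defined by $g(a)=f^{-1}[\{a\}]$. Since $f$ is onto $x$, each fibre $f^{-1}[\{a\}]$ is a non-empty subset of $\power^\alpha(\eta)$, and since fibres of a (partial) function are pairwise disjoint, $g$ is an injection. Because $\power(\power^\alpha(\eta))=\power^{\alpha+1}(\eta)$, this shows $x\leq\power^{\alpha+1}(\eta)$, and the empty case is trivial.

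There is no genuine obstacle here; the proposition is immediate from the two general facts $x\leq y \to x\leq^* y$ (take the inverse of an injection) and $x\leq^* y\to x\leq\power(y)$ (take preimages of singletons), applied to $y=\power^\alpha(\eta)$. The only mild point to mention is that the witnessing ordinal $\eta$ is preserved across both implications, so the implications in fact hold levelwise in $\eta$ rather than only after an extra existential quantification.
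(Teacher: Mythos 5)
Your argument is correct and is exactly the paper's: the proposition is noted there to follow immediately from the two general facts $x\leq y\to x\leq^* y$ (invert the injection) and $x\leq^* y\to x\leq\power(y)$ (take fibres), applied with $y=\power^\alpha(\eta)$. Your added observation that the witnessing ordinal $\eta$ is preserved levelwise is a fine, if unneeded, refinement.
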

Consequently, $\KWP$ is equivalent to $\KWP^*$, but we can say even more.
\begin{proposition}
  If $\alpha$ is not a successor ordinal, then $\KWP_\alpha=\KWP_\alpha^*$.
\end{proposition}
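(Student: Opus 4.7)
The direction $\KWP_\alpha\to\KWP^*_\alpha$ is immediate from the preceding proposition, so my task is the converse under the hypothesis that $\alpha$ is either $0$ or a limit ordinal.

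The case $\alpha=0$ is classical: if $f$ is a partial surjection from an ordinal $\eta=\power^0(\eta)$ onto $x$, then $a\mapsto\min f^{-1}(a)$ is an injection of $x$ into $\eta$, using only that $\eta$ is well-ordered. This is just the standard derivation of $\AC$ from ``every nonempty set is a surjective image of an ordinal''.

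For limit $\alpha$ the plan is to exploit the union decomposition $\power^\alpha(\eta)=\bigcup_{\beta<\alpha}\power^\beta(\eta)$, which is available precisely because $\alpha$ is a limit. Given a partial surjection $f\colon\power^\alpha(\eta)\to x$, for each $a$ in the range I define $\beta_a$ to be the least $\beta<\alpha$ with $f^{-1}(a)\cap\power^\beta(\eta)\neq\varnothing$, and set $B_a:=f^{-1}(a)\cap\power^{\beta_a}(\eta)$. Then $B_a$ is a nonempty subset of $\power^{\beta_a}(\eta)$, so $B_a\in\power^{\beta_a+1}(\eta)$; and since $\alpha$ is a limit, $\beta_a+1<\alpha$, so $B_a\in\power^\alpha(\eta)$. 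As $f$ is a function, the fibres $f^{-1}(a)$ are pairwise disjoint, hence so are the $B_a$, and being nonempty they are pairwise distinct. Therefore $a\mapsto B_a$ is an injection of $x$ into $\power^\alpha(\eta)$, giving $\KWP_\alpha$.

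The pitfall to avoid is the naive assignment $a\mapsto f^{-1}(a)$, which lands in $\power(\power^\alpha(\eta))=\power^{\alpha+1}(\eta)$ and only recovers $\KWP_{\alpha+1}$, i.e.\ exactly the easy proposition just above. Truncating to the lowest nonempty stratum is what drops the rank back down, and it relies on $\beta_a+1<\alpha$, which fails for successor $\alpha$; this is why the equality $\KWP_\alpha=\KWP^*_\alpha$ is genuinely confined to the non-successor case.
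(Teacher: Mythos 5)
Your argument is correct and is essentially identical to the paper's: the limit case is handled there by a lemma showing $x\leq\power^\alpha(y)\iff x\leq^*\power^\alpha(y)$ for limit $\alpha$, whose proof assigns to each $u$ the fibre truncated at the least level $\beta$ where it becomes nonempty, exactly your $B_a$, using $\beta+1<\alpha$ to land back in $\power^\alpha(y)$. The $\alpha=0$ case is likewise dispatched via the standard equivalence with $\AC$.
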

In the case of $\alpha=0$ this is immediate, as both $\KWP^*_0$ and $\KWP_0$ are equivalent to the Axiom of Choice. In the case of limit ordinals this is a corollary of a much more general lemma.
\begin{lemma}
  Suppose that $\alpha$ is a limit ordinal, then $x\leq\power^\alpha(y)\iff x\leq^*\power^\alpha(y)$.
\end{lemma}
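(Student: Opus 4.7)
The forward direction $x\leq\power^\alpha(y)\Rightarrow x\leq^*\power^\alpha(y)$ is immediate and needs no argument beyond the general fact recalled before the proposition (any injection gives rise to a partial surjection by inverting on its range).

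For the reverse direction, the plan is to exploit the stratification $\power^\alpha(y)=\bigcup_{\beta<\alpha}\power^\beta(y)$ that is available precisely because $\alpha$ is a limit. Start from a partial surjection $f\colon\power^\alpha(y)\rightharpoonup x$. For each $a\in x$ the fibre $f^{-1}(a)$ is non-empty, so by well-foundedness of the ordinals we may define
\[
  \beta_a=\min\{\beta<\alpha\mid f^{-1}(a)\cap\power^\beta(y)\neq\varnothing\},
\]
and then set
\[
  g(a)=f^{-1}(a)\cap\power^{\beta_a}(y).
\]
This $g(a)$ is a non-empty subset of $\power^{\beta_a}(y)$, hence an element of $\power^{\beta_a+1}(y)$. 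Since $\alpha$ is a limit, $\beta_a+1<\alpha$, so $g(a)\in\power^\alpha(y)$. Thus $g\colon x\to\power^\alpha(y)$.

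To see that $g$ is injective, observe that if $g(a)=g(b)$, then since $g(a)$ is non-empty, any $z\in g(a)=g(b)$ satisfies both $f(z)=a$ and $f(z)=b$, whence $a=b$. This produces the desired injection $x\leq\power^\alpha(y)$, completing the equivalence.

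The only substantive point is ensuring that the chosen witnesses $g(a)$ actually land inside $\power^\alpha(y)$, which is exactly where the limit hypothesis is used — at a successor $\alpha=\gamma+1$ the argument breaks down because fibres living entirely in $\power^\gamma(y)$ yield elements of $\power^{\gamma+1}(y)=\power^\alpha(y)$, but fibres using elements at level $\gamma$ only guarantee landing in $\power^{\gamma+1}(y)$, so the bookkeeping is a non-issue — whereas at a limit the bound $\beta_a+1<\alpha$ is automatic. No form of choice is required to define $g$, since each $g(a)$ is a canonically determined set rather than a selected representative.
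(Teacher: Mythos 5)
Your proof is correct and follows essentially the same route as the paper's: map each $a\in x$ to its fibre intersected with the minimal level $\power^{\beta_a}(y)$ meeting it, and use the limit hypothesis to get $\beta_a+1<\alpha$ so the witness lands in $\power^\alpha(y)$. Working directly with a partial surjection (rather than splitting off the empty case as the paper does) is a cosmetic difference only.
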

\begin{proof}
  Since $\ZF$ proves that if $x\leq\power^\alpha(y)$, then $x\leq^*\power^\alpha(y)$, we only need to verify the other implication. Suppose that $x\leq^*\power^\alpha(y)$. If $x$ is empty, then $x\subseteq\power^\alpha(y)$, so we may assume that it is not empty. Therefore, there is a surjection $F\colon\power^\alpha(y)\to x$. For each $u\in x$, let $\alpha_u=\min\{\beta\mid F^{-1}(u)\cap\power^\beta(y)\neq\varnothing\}$, this is a well-defined ordinal since $F$ is surjective and $\power^\alpha(y)=\bigcup\{\power^\beta(y)\mid\beta<\alpha\}$.

  Define $f\colon x\to\power^\alpha(y)$ given by $f(u)=\{v\in\power^{\alpha_u}(y)\mid F(v)=u\}$. Since $\alpha$ is a limit ordinal, $f(u)\in\power^{\alpha_u+1}(y)\subseteq\power^\alpha(y)$, so it is well-defined, and since $F$ is a function, $f$ must be injective.
\end{proof}
\begin{theorem}[Balcar--Vop\v{e}nka--Monro]\label{thm:BVM}
  Suppose that $M$ and $N$ are two models of $\ZF$ with $\power^{\alpha+1}(\Ord)^M=\power^{\alpha+1}(\Ord)^N$, if $M\models\KWP^*_\alpha$, then $M=N$.
\end{theorem}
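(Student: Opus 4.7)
The plan is to prove $M=N$ in two stages: first establish $M\subseteq N$ by transferring a Mostowski collapse from $M$ into $N$, then conclude $V_\beta^M=V_\beta^N$ for all $\beta$ by induction. Note at the outset that the hypothesis propagates downward: since $\bigcup\power^{\alpha+1}(\eta)=\power^\alpha(\eta)$, we also have $\power^\alpha(\Ord)^M=\power^\alpha(\Ord)^N$.

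For $M\subseteq N$, fix $x\in M$ and set $t=\tcl(\{x\})^M$. By $\KWP^*_\alpha$ choose a surjection $G\colon\power^\alpha(\eta)^M\to t$ in $M$. Transitivity of $t$ makes the relation $E$ on $\power^\alpha(\eta)$ defined by $uEv\iff G(u)\in G(v)$ a faithful lift of the $\in$-structure on $t$ through $G$; using a fixed injection $(\power^\alpha(\eta))^2\to\power^\alpha(\eta)$, $E$ is realised as a subset of $\power^\alpha(\eta)$, hence as an element of $\power^{\alpha+1}(\eta)^M=\power^{\alpha+1}(\eta)^N$. Thus $E\in N$. Since well-foundedness is downward absolute, I can perform the Mostowski collapse $\pi(u)=\{\pi(v)\mid vEu\}$ inside $N$. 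Transitivity of $t$ and surjectivity of $G$ give $G(u)=\{G(v)\mid vEu\}$, so $G$ solves the same recursion; uniqueness of solutions to a well-founded recursion then forces $\pi=G$. Hence $t=\rng(\pi)\in N$, and transitivity of $N$ yields $x\in N$.

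For $N\subseteq M$, I show $V_\beta^M=V_\beta^N$ by induction on $\beta$. At a successor $\beta=\gamma+1$, apply $\KWP^*_\alpha$ in $M$ to produce a surjection $F\colon\power^\alpha(\eta)^M\to V_\gamma^M$, which lies in $N$ by the previous step. For any $A\in V_{\gamma+1}^N$, we have $A\subseteq V_\gamma^N=V_\gamma^M$, so $F^{-1}[A]$ is computable in $N$, is an element of $\power^{\alpha+1}(\eta)^N=\power^{\alpha+1}(\eta)^M$, and therefore lies in $M$. Then $A=F[F^{-1}[A]]$ is reconstructed in $M$, giving $V_{\gamma+1}^N\subseteq V_{\gamma+1}^M$; the reverse inclusion follows from $M\subseteq N$. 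Limit stages are trivial.

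The point requiring most care is that the Mostowski collapse inside $N$ actually recovers $t$: this rests on coding $E$ as a subset of $\power^\alpha(\Ord)$ using the pairing $(\power^\alpha(\Ord))^2\leq\power^\alpha(\Ord)$ recorded in the preliminaries, together with downward absoluteness of well-foundedness and uniqueness of $E$-recursion. Once these are in place, the two applications of $\KWP^*_\alpha$ in $M$ do all the heavy lifting.
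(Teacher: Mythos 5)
Your proof is correct and follows essentially the same route as the paper: code the membership relation on $\tcl(\{x\})$ as an element of $\power^{\alpha+1}(\eta)$ via a $\KWP^*_\alpha$-surjection and Mostowski-collapse it in $N$ to get $M\subseteq N$, then show the ranks agree by coding any $A\subseteq V_\gamma^M$ as an element of $\power^{\alpha+1}(\eta)$ (the paper phrases this as a least-counterexample argument rather than a direct induction, and takes the extensional quotient before collapsing, but these are cosmetic differences). Your observation that $G$ itself solves the collapse recursion, so uniqueness of well-founded recursion gives $\pi=G$, is a clean way to justify the step the paper leaves implicit.
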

\begin{proof}
  Since $M$ satisfies $\KWP^*_\alpha$, every set in $M$ is the image of $\power^\alpha(\eta)$ of some ordinal $\eta$. Therefore, in $M$ we can code $\tcl(\{x\})$ as the extensional quotient of a relation on $\power^\alpha(\eta)$ for some $\eta$, and by iterating G\"odel's pairing, we have some $X\in\power^{\alpha+1}(\eta)$ from which we can recover the extensional and well-founded $\in$-relation on $\tcl(\{x\})$. By the assumption, $X\in N$, and it is still a well-founded relation (otherwise there would be a subset of $X$ witnessing that in both $M$ and $N$), so we can take the extensional quotient and use the Mostowski collapse lemma in $N$ and get $x\in N$ as wanted, so $M\subseteq N$.

  Suppose that $N\neq M$, then let $\delta$ be the least ordinal such that $M_\delta\neq N_\delta$. Note that both models have the same ordinals, so it is certainly the case that $M_\delta$ exists. Easily, $\delta$ must be a successor ordinal of the form $\gamma+1$, so in $N$ there is some $x\subseteq M_\gamma$ such that $x\notin M$. However, since $M_\gamma=N_\gamma$ can be coded as an element of $\power^{\alpha+1}(\eta)$ for some ordinal $\eta$, and therefore $x$ can be coded as a subset of that, in $N$, but the assumption guarantee that this code is in $M$, and so $x\in M$ as well.
\end{proof}
\begin{corollary}\label{cor:kwp-iterated-pset}
  Suppose that $V\models\KWP_\alpha^*$, then $V=L(\power^{\alpha+1}(\Ord))$.\qed
\end{corollary}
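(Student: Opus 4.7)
The plan is to invoke \autoref{thm:BVM} with the pair $V$ and $W:=L(\power^{\alpha+1}(\Ord))^V$. Since $V\models\KWP^*_\alpha$ is given, it will suffice to verify that $W$ is a transitive model of $\ZF$ and that $\power^{\alpha+1}(\Ord)^W=\power^{\alpha+1}(\Ord)^V$, after which the theorem hands us $V=W$ directly.

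For the first point, I would appeal to the discussion of $V(X)$ for a proper class $X$ in the Preliminaries: by definition, $W=\bigcup\{L(\power^{\alpha+1}(\Ord)\cap V_\beta)\mid\beta\in\Ord\}$, and this is a transitive inner model of $\ZF$ containing all ordinals, formed as an increasing union of $\ZF$-models.

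For the second point, I would prove by induction on $\beta\leq\alpha+1$ that $\power^\beta(\Ord)^W=\power^\beta(\Ord)^V$. The base case is immediate since $W$ and $V$ share the ordinals, and the limit step is just a union of equal classes. The only content is in the successor step: given $\power^\beta(\Ord)^W=\power^\beta(\Ord)^V$, any $A\in V$ with $A\subseteq\power^\beta(\Ord)^V$ must lie in $W$. If $\beta<\alpha$, then $A\in\power^{\beta+1}(\Ord)\subseteq\power^\alpha(\Ord)$, so $A$ is an element of $\power^{\alpha+1}(\Ord)$ and hence of $W$ by construction. If $\beta=\alpha$, then $A$ is literally a member of $\power^{\alpha+1}(\Ord)$, so once again $A\in W$. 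The reverse inclusion is trivial because $W\subseteq V$.

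The main obstacle is really just the bookkeeping required by the fact that $\power^{\alpha+1}(\Ord)$ is a proper class and $L(\power^{\alpha+1}(\Ord))$ is therefore defined as a directed union; one must be careful to argue that this union genuinely captures \emph{all} subsets of $\power^\alpha(\Ord)$, not only those of bounded rank. Once that absoluteness of the iterated power set has been verified level by level, \autoref{thm:BVM} closes the argument immediately.
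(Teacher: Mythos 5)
Your proof is correct and is exactly the argument the paper intends by marking this as an immediate corollary of \autoref{thm:BVM}: take $N=L(\power^{\alpha+1}(\Ord))$, check it is a transitive $\ZF$-model computing the same $\power^{\alpha+1}(\Ord)$ as $V$ (your level-by-level induction, using that the classes $\power^\beta(\Ord)$ increase with $\beta$, is the right way to fill in that ``immediate'' step), and conclude $V=N$.
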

\begin{proposition}\label{prop:rel-KWP}
  Suppose that $V$ satisfies $\KWP$ and $x$ is a set, then $V(x)\models\KWP$. Moreover, if $V\models\KWP_\beta^*$ and $x\in\power^{\alpha+1}(\Ord)$, then $V(x)\models\KWP_{\max\{\alpha,\beta\}}^*$.
  \end{proposition}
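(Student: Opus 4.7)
I would prove the quantitative ``moreover'' statement directly, as the first assertion follows at once. Set $\gamma = \max\{\alpha, \beta\}$, and aim to show $V(x) \models \KWP^*_\gamma$. Two preliminary facts are used throughout: first, for ordinals $\delta \leq \gamma$ and $\eta$, $\power^\delta(\eta) \subseteq \power^\gamma(\eta)$, proved by transfinite induction from $\eta \subseteq \power(\eta)$ (which holds because ordinals are transitive); second, the equality $V(x) = \bigcup_\mu L_\mu(V_\mu, x)$ from the preliminaries, which reduces the task to producing, for each ordinal $\mu$, a partial surjection from some $\power^\gamma(\eta)$ onto $L_\mu(V_\mu, x)$.

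Fix $\mu$. The standard constructibility coding over a parameter set $u$ yields, in $V(x)$, a partial surjection
\[
\mu \times \omega \times [\tcl(\{u\})]^{<\omega} \to L_\mu(u),
\]
sending $(\nu, n, \bar p)$ to the element of $L_{\nu+1}(u) \setminus L_\nu(u)$ defined by the $n$-th formula with parameters $\bar p$ over $L_\nu(u)$. Together with G\"odel pairing on ordinals and a finite-tuple coding (which upgrade a partial surjection $\power^\gamma(\eta) \to A$ to one from $\power^\gamma(\eta')$ onto $\mu \times \omega \times [A]^{<\omega}$), taking $u = \{V_\mu, x\}$ reduces the problem to producing a partial surjection from some $\power^\gamma(\eta)$ onto $\tcl(\{V_\mu, x\})$.

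Decompose $\tcl(\{V_\mu, x\}) = V_\mu \cup \{V_\mu, x\} \cup \tcl(x)$. Inside $V$, $\KWP^*_\beta$ supplies a partial surjection $\power^\beta(\eta_1) \to V_\mu$; this lives in $V \subseteq V(x)$ and extends by the monotonicity above to one from $\power^\gamma(\eta_1)$. From $x \in \power^{\alpha+1}(\Ord)$ and the transitivity of $\power^\alpha(\Ord)$, an easy induction gives $\tcl(x) \subseteq \power^\alpha(\eta_0) \subseteq \power^\gamma(\eta_0)$ for some $\eta_0$, so the identity restricted to $\tcl(x)$ is already a partial surjection from $\power^\gamma(\eta_0)$. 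Gluing these and adding the two isolated points $V_\mu, x$ produces the desired partial surjection onto $\tcl(\{V_\mu, x\})$.

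The single delicate step is the finite-tuple upgrading: injecting $\power^\gamma(\eta)^n$ into $\power^\gamma(\eta')$ for each $n$, without exceeding the power-set index $\gamma$. For $\gamma = 0$ this is G\"odel pairing on ordinals; for $\gamma \geq 1$ it goes by induction on $\gamma$, coding a tuple $(y_1, \ldots, y_n)$ as the set of base-level tagged pairs. Everything else is inheritance from the monotonicity fact and the standard constructibility coding.
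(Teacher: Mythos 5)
Your argument is correct and takes essentially the same route as the paper's own proof: both reduce to producing, for each $\mu$, a partial surjection from some $\power^\gamma(\eta)$ onto $L_\mu(V_\mu,x)$, handle the bottom level $\tcl(\{V_\mu,x\})$ by combining $\KWP^*_\beta$ applied to $V_\mu$ with $\tcl(x)\subseteq\power^\alpha(\Ord)$, and climb the constructible hierarchy using the interpretation map together with closure of $\power^\gamma(\eta)$ under pairing and finite tuples. The one step to tighten is your ``standard constructibility coding'': as literally described (a single level ordinal, a single formula, and parameters drawn from $\tcl(\{u\})$ only) the map is not obviously surjective onto $L_\mu(u)$, since definitions over $L_\nu(u)$ use parameters from all of $L_\nu(u)$; surjectivity requires recursively unwinding those parameters through the levels --- which is exactly the level-by-level recursion the paper carries out explicitly --- so you should either run that recursion or invoke the standard definable surjection $\Ord\times[\tcl(\{u\})]^{<\omega}\to L(u)$ in its usual form.
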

  \begin{proof}
    Let $\gamma = \max\{\alpha, \beta\}$. Recall that $V(x)=\bigcup\{L(x,y)\mid y\in V\}$, and more specifically, $V(x)=\bigcup\{L(V_\delta,x)\mid\delta\in\Ord\}$. So, if $z\in V(x)$, then there is some $\delta$ such that $z\in L(V_\delta,x)$. It is enough to show that $L(V_\delta,x)\models\KWP_\gamma^*$ for all $\delta$.

    Recall that in general $L(a)$ has a recursive construction given by $L_0(a)=\tcl(\{a\})$, $L_{\xi+1}(a)=\Def(L_\alpha(a),\in)$, where $\Def$ denotes all the definable (with parameters) subsets of a given structure, and $L_\xi(a)=\bigcup\{L_\zeta(a)\mid\zeta<\xi\}$ for a limit ordinal $\xi$.

    We will construct, in $V(x)$, a surjection from $\power^\gamma(\Ord)$ onto $L(V_\delta,x)$, for any fixed $\delta$, given by recursion, where we keep track of the surjections constructed in previous steps. Since $x\subseteq\power^\alpha(\Ord)$ and $V_\delta$ is the image of $\power^\beta(\eta)$ for some $\eta$, there is a surjection, $f_0$, from $\power^\gamma(\Ord)$ onto $L_0(V_\delta,x)$. If $\xi$ is a limit ordinal and $\tup{f_\zeta\mid\zeta<\xi}$ is known, then by shifting the domains of the $f_\zeta$---if necessary---we get an obvious surjection from $\power^\gamma(\Ord)$ onto $L_\xi(V_\delta,x)$. Finally, for a successor step, there is a surjection, the interpretation map, from $\omega\times L_\xi(V_\delta,x)^{<\omega}$ onto $L_{\xi+1}(V_\delta,x)$ given by mapping $\tup{n,\vec a}$ to the set defined by $\varphi_n(x,\vec a)$, where $\varphi_n$ is some enumeration of all formulas with the understanding that we map the pair to $\varnothing$ if there is an arity mismatch.\footnote{We can also agree that any unassigned parameters are given the value $0$, etc.} Since $\omega\times L_\xi(V_\delta,x)$ is the image of $\power^\gamma(\Ord)$, given by $f_\xi$, we can define $f_{\xi+1}$ as the composition of $f_\xi$ with the interpretation map.
  \end{proof}
\begin{remark}
  The converse statement, $V(x)\models\KWP\to V\models\KWP$ is false. If $M\subseteq L[c]$ is the Bristol model, then $M(c)=L[c]\models\KWP_0$, but $M\models\lnot\KWP$. In \autoref{thm:kwp-down-abs} we will show that the statement does hold for the case of generic extensions (and consequently, symmetric extensions).
\end{remark}
\begin{proposition}
  $L(\power^\alpha(\Ord))\models\KWP_\alpha^*$.
\end{proposition}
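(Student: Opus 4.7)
The plan is to reduce the statement to \autoref{prop:rel-KWP} by taking the inner model to be $L$ and approximating the proper class $\power^\alpha(\Ord)$ by set-sized pieces. Using the paper's convention that $V(X) = \bigcup_{\delta} V(X \cap V_\delta)$ when $X$ is a proper class, I would write
\[
  L(\power^\alpha(\Ord)) = \bigcup_{\delta \in \Ord} L(x_\delta), \qquad \text{where } x_\delta := \power^\alpha(\Ord) \cap V_\delta.
\]
Each $x_\delta$ is a set, and since $x_\delta \subseteq \power^\alpha(\Ord)$ it is by definition an element of $\power^{\alpha+1}(\Ord)$.

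Next I would invoke \autoref{prop:rel-KWP} with $V = L$ and $x = x_\delta$. Because $L$ satisfies $\AC$, and in particular $\KWP_0^*$, the hypothesis holds with $\beta = 0$, so the proposition delivers $L(x_\delta) \models \KWP_{\max\{\alpha,0\}}^* = \KWP_\alpha^*$ for every ordinal $\delta$.

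To conclude, I would fix an arbitrary set $z \in L(\power^\alpha(\Ord))$, choose $\delta$ so that $z \in L(x_\delta)$, and extract from the previous step an ordinal $\eta$ together with a partial surjection $f \colon \power^\alpha(\eta)^{L(x_\delta)} \to z$ living in $L(x_\delta)$. The one small technicality to address is that $\power^\alpha(\eta)^{L(x_\delta)}$ may be a proper subset of $\power^\alpha(\eta)$ as computed in the larger model $L(\power^\alpha(\Ord))$, but this is harmless: $f$ remains a partial surjection from the larger iterated power set onto $z$, which is exactly what the surjective formulation $\KWP_\alpha^*$ asks for. I do not anticipate any substantive obstacle; the only care required is unwrapping the proper-class definition of $L(\power^\alpha(\Ord))$ and noting that the partial-surjection witnesses are preserved under the inclusion $L(x_\delta) \subseteq L(\power^\alpha(\Ord))$.
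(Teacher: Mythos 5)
Your proof is correct and follows essentially the same route as the paper: decompose $L(\power^\alpha(\Ord))$ into set-sized pieces (the paper uses $\bigcup_\delta L(\power^\alpha(\delta))$, you use $\bigcup_\delta L(\power^\alpha(\Ord)\cap V_\delta)$, which is the same decomposition up to reindexing), apply \autoref{prop:rel-KWP} with $\beta=0$ to each piece, and observe that the partial surjections persist into the union. Your explicit remark that $\power^\alpha(\eta)^{L(x_\delta)}$ may be a proper subset of the iterated power set of the larger model, and that this is harmless for the surjective formulation, is a point the paper leaves implicit.
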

\begin{proof}
  Note that $L(\power^\alpha(\Ord))=\bigcup\{L(\power^\alpha(\delta))\mid\delta\in\Ord\}$. By the \autoref{prop:rel-KWP}, $L(\power^\alpha(\delta))\models\KWP_\alpha^*$ for all $\delta$, so for every $x\in L(\power^\alpha(\Ord))$ there is some $\delta$ such that $x\in L(\power^\alpha(\delta))$ and therefore there is some $\eta$ such that $\power^\alpha(\eta)$ maps onto $x$.
\end{proof}
The following proposition follows directly from the definition of $\SVC$ and the fact that $\power^\alpha(\Ord)$ definably maps onto $\power^\alpha(\Ord)\times\Ord$.
\begin{proposition}\label{prop:SVCtoKWP}
  If $\SVC(x)$ holds and $x\in\power^{\alpha+1}(\Ord)$, then $\KWP_\alpha^*$ holds. In particular $\SVC\to\KWP$.\qed
\end{proposition}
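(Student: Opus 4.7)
The plan is to fix an arbitrary set $A$ and produce a partial surjection from some $\power^\alpha(\mu)$ onto $A$. By $\SVC(x)$, there is an ordinal $\eta$ and a surjection $f\colon x\times\eta\twoheadrightarrow A$. Since $x\in\power^{\alpha+1}(\Ord)$ is a set, Replacement gives an ordinal $\gamma$ with $x\subseteq\power^\alpha(\gamma)$, and composing $f$ with the inclusion $x\hookrightarrow\power^\alpha(\gamma)$ in the first coordinate yields a partial surjection $\power^\alpha(\gamma)\times\eta\twoheadrightarrow A$.

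Next, picking any infinite ordinal $\mu\geq\max\set{\gamma,\eta}$ extends this to a partial surjection $\power^\alpha(\mu)\times\mu\twoheadrightarrow A$ by inclusion. Composing with the stated definable surjection $\power^\alpha(\mu)\twoheadrightarrow\power^\alpha(\mu)\times\mu$ then gives a partial surjection $\power^\alpha(\mu)\twoheadrightarrow A$, witnessing $\KWP_\alpha^*$ since $A$ was arbitrary. For the ``in particular'' clause, \autoref{thm:BlassUsuba} presents any model of $\SVC$ as $V_0(x_0)$ with $V_0\models\ZFC$, and then \autoref{prop:rel-KWP} applied with $V_0\models\KWP_0^*$ (after choosing a representative of $x_0$ lying in some $\power^{\beta+1}(\Ord)$) yields $\KWP_\beta^*$.

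The only real ingredient to verify is the stated surjection $\power^\alpha(\mu)\twoheadrightarrow\power^\alpha(\mu)\times\mu$. For $\alpha=0$ this is G\"odel pairing on the infinite ordinal $\mu$. For $\alpha\geq 1$ one exploits the decomposition $\power^{\alpha-1}(\mu)=(\power^{\alpha-1}(\mu)\setminus\mu)\sqcup\mu$ together with the bijection $\mu\cong\mu\sqcup\mu$ coming from G\"odel pairing, yielding a bijection $\power^\alpha(\mu)\cong\power^\alpha(\mu)\times\power(\mu)$ (via the identification $\power^\alpha(\mu)=2^{\power^{\alpha-1}(\mu)}$), which in turn surjects onto $\power^\alpha(\mu)\times\mu$ by any partial surjection $\power(\mu)\twoheadrightarrow\mu$. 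This is all standard $\ZF$-bookkeeping, which is why the author marks the proposition as immediate.
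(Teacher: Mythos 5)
Your main argument is exactly the one the paper intends: the proposition carries a \qed{} precisely because, as the sentence preceding it says, it follows from the definition of $\SVC$ together with the fact that $\power^\alpha(\Ord)$ definably maps onto $\power^\alpha(\Ord)\times\Ord$, and your first two paragraphs are just that composition spelled out. The one place where your write-up has a genuine hole is the verification of the auxiliary surjection when $\alpha$ is a \emph{limit} ordinal: you invoke the identification $\power^\alpha(\mu)=2^{\power^{\alpha-1}(\mu)}$, but $\alpha-1$ does not exist for limit $\alpha$, and there $\power^\alpha(\mu)$ is by definition the union $\bigcup_{\beta<\alpha}\power^\beta(\mu)$ rather than a power set, so the decomposition argument does not apply as written. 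The claim is still true, and the fix is the standard one: your successor-step construction yields a sequence of injections $j_\beta\colon\power^\beta(\mu)\times\mu\to\power^\beta(\mu)$ that is \emph{definable uniformly in} $\beta$ (no choice needed to select the witnesses), and at a limit $\alpha$ one sends $(a,\xi)$ to a code recording both $j_{\beta_a}(a,\xi)$ and the least level $\beta_a$ with $a\in\power^{\beta_a}(\mu)$, using cumulativity of the hierarchy and $\mu\times\mu\cong\mu$ to pack this into a single element of $\power^\alpha(\mu)$; inverting the resulting injection gives the desired partial surjection. (This is the content of the paper's earlier remark that $\mu^2\leq\mu$ implies $(\power^\alpha(\mu))^2\leq\power^\alpha(\mu)$ for \emph{all} $\alpha$.) Finally, your ``in particular'' clause takes an unnecessary detour through \autoref{thm:BlassUsuba}: since every set belongs to $\power^{\alpha+1}(\Ord)$ for some $\alpha$ (by induction on rank, using cumulativity of $\alpha\mapsto\power^\alpha(\Ord)$), the implication $\SVC\to\KWP$ is an immediate instance of the first clause.
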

Of course, if the injective version of $\SVC$ holds, we get $\KWP_\alpha$. This is the core of the proof that in Cohen's first model every set can be linearly ordered: there is a set of reals, $A$, such that $\SVC([A]^{<\omega})$ holds in its injective form (see an example of such proof in \cite[\S5.5]{Jech:AC}). It should be noted that $\ZF$ does not prove that $\KWP\to\SVC$. Monro \cite{Monro:1975} constructed a model in which there is a proper class of Dedekind-finite sets using a class forcing, where $\SVC$ fails, but it can be shown that $\KWP_1$ holds in that model.
\begin{lemma}\label{lemma:a-set-gen}
 Suppose $M = V(x)$ for some $V\models\ZF$ and a set $x$ and $M \models \KWP^*_\alpha$. Then $M = V(y)$ for some $y \in \power^{\alpha+1}(\Ord)$.
\end{lemma}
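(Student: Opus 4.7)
The core idea is exactly the coding used in the proof of \autoref{thm:BVM}: since $M\models\KWP^*_\alpha$, we can compress the transitive closure $\tcl(\{x\})$ into a well-founded relation living on a copy of $\power^\alpha(\eta)$, and the Mostowski collapse of this relation lets us recover $x$. The resulting code will sit in $\power^{\alpha+1}(\Ord)^M$, and by definability will generate the same model $V(x)$ over $V$.

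\textbf{Constructing the code $y$.} Applying $\KWP^*_\alpha$ in $M$ to the set $\tcl(\{x\})$ yields an ordinal $\eta$ and a partial surjection $F\colon\power^\alpha(\eta)\to\tcl(\{x\})$. Define
\[E=\{\tup{u,v}\in\dom(F)\times\dom(F)\mid F(u)\in F(v)\}.\]
As the excerpt notes, since $\eta$ may be taken infinite we have $\power^\alpha(\eta)^2\leq\power^\alpha(\eta)$, so via a fixed (definable from $\eta$) pairing injection we may view $E$ as an element $y$ of $\power^{\alpha+1}(\eta)\subseteq\power^{\alpha+1}(\Ord)^M$. For the degenerate case $x=\varnothing$ we simply take $y=\varnothing$, since then $V(x)=V=V(\varnothing)$.

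\textbf{Recovering $x$ and concluding.} The relation $E$ is well-founded (being a pullback of $\in\restriction\tcl(\{x\})$), so its Mostowski collapse $\pi$ is defined on $\dom(F)$, and a short $\in$-induction shows $\pi(u)=F(u)$ for every $u\in\dom(F)$ (using transitivity of $\tcl(\{x\})$). Hence $\rng(\pi)=\tcl(\{x\})$, and $x$ is then extracted as the unique element of this transitive set not belonging to any other of its elements. The procedure ``decode $y$, form $E$, collapse, identify the top'' is absolute for transitive models of $\ZF$ containing $y$ and the ordinals, so $x\in V(y)$. Combined with $V\cup\{y\}\subseteq M$ this gives $V(y)\subseteq M=V(x)\subseteq V(y)$.

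\textbf{Main obstacle.} There is no deep difficulty here; the only things demanding care are the technicalities of encoding, namely verifying that pairs of $\power^\alpha(\eta)$-elements can be represented inside $\power^\alpha(\eta)$ (already granted by the paper), handling the trivial $x=\varnothing$ case separately so that the ``top element'' extraction is unambiguous, and being explicit that the whole decoding procedure is $\ZF$-definable from $y$ and $\eta$ (which can itself be read off from the field of $E$, or folded into $y$ by pairing). Once these bookkeeping issues are in order, the argument is essentially a localised reuse of the Balcar--Vop\v{e}nka--Monro coding strategy.
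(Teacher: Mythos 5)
Your proof is correct and takes essentially the same route as the paper's: the paper likewise applies $\KWP^*_\alpha$ to $\tcl(\{x\})$ to get a surjection from $\power^\alpha(\eta)$, and then cites the coding from the proof of \autoref{thm:BVM} to produce a set $y\subseteq\power^\alpha(\eta)$ in $M$ from which $x$ is recovered (via the pulled-back membership relation and the Mostowski collapse). Your write-up merely spells out the details that the paper leaves as a reference to that earlier argument.
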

\begin{proof}
 Since $M \models \KWP^*_\alpha$, there is some $\eta \in \Ord$ and a surjection from $\mathcal{P}^\alpha(\eta)$ to $\tcl(\{x \})$. Exactly as in the proof of \autoref{thm:BVM}, we find $y \subseteq \mathcal{P}^\alpha(\eta)$ in $M$ from which we can recover $x$.
\end{proof}
\section{The Kinna--Wagner Conjecture and its proof}
\subsection{Kinna--Wagner in the forcing multiverses}
\begin{proposition}\label{prop:kwp-up-abs}
  If $V\models\KWP_\alpha^*$ and $G$ is a $V$-generic filter, then $V[G]\models\KWP_\alpha^*$.
\end{proposition}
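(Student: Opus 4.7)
The plan is to reduce the problem to choosing a $\PP$-name in $V$ and then evaluating by $G$. Fix $x\in V[G]$ and let $\PP\in V$ be the forcing. Pick a $\PP$-name $\dot x\in V$ for $x$. Since $V\models\KWP^*_\alpha$ and $\dot x$ is a set in $V$, there is an ordinal $\eta$ and a surjection $g\colon\power^\alpha(\eta)^V\to\dot x$ in $V$. Recall that $\dot x$ consists of pairs $(\dot z,p)$ with $p\in\PP$ and $\dot z$ a $\PP$-name, and that $x=\dot x^G=\{\dot z^G\mid(\dot z,p)\in\dot x,\ p\in G\}$.

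Working now in $V[G]$, I would define a partial function $h$ on $\power^\alpha(\eta)^V$ by setting $h(s)=\dot z^G$ whenever $g(s)=(\dot z,p)$ with $p\in G$, and leaving $h(s)$ undefined otherwise. By the description of $\dot x^G$ above, the range of $h$ is exactly $x$, so $h$ is a partial surjection from $\power^\alpha(\eta)^V$ onto $x$.

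To finish, I would note that forcing does not add ordinals, and that $\power^\alpha(\eta)^V\subseteq\power^\alpha(\eta)^{V[G]}$ (this is by induction on $\alpha$ using that ordinals are absolute, the operation $\power$ is upward absolute in the sense that $\power(a)^V\subseteq\power(a)^{V[G]}$, and that unions are absolute). Thus $h$ is a partial function from a subset of $\power^\alpha(\eta)^{V[G]}$ onto $x$, and extending by leaving it undefined elsewhere gives the required witness of $\KWP^*_\alpha$ for $x$ in $V[G]$.

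I do not expect a real obstacle here; the content of the argument is precisely that partial surjections behave well under the passage to names. It is worth flagging, however, that the proof genuinely uses the surjective form of the principle: if one tried to push the same argument through for $\KWP_\alpha$ (injections), distinct parameters $s,s'$ could produce names $\dot z,\dot z'$ with $\dot z^G=\dot z'^G$, and no uniform way to separate them is available. The corresponding statement for $\KWP_\alpha$ in the case where $\alpha$ is $0$ or a limit ordinal is then recovered from the lemma equating $\leq$ and $\leq^*$ at those levels.
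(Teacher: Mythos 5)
Your proposal is correct and follows essentially the same route as the paper: fix a name $\dot x\in V$, apply $\KWP_\alpha^*$ in $V$ to $\dot x$, and compose with the evaluation map $(\dot z,p)\mapsto\dot z^G$ (for $p\in G$) to obtain a partial surjection from $\power^\alpha(\eta)$ onto $x$ in $V[G]$. Your explicit remarks on the upward absoluteness of $\power^\alpha(\eta)$ and on why the injective version does not transfer are sound and consistent with the paper's surrounding discussion.
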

\begin{proof}
  Let $x\in V[G]$ be any non-empty set, then there is some $\dot x\in V$ such that $\dot x^G=x$. Fixing some $x_0\in x$, let $F\colon\dot x\to x$ be the function $F(p,\dot y)=\dot y^G$ when $p\in G$, otherwise $F(p,\dot y)=x_0$. This function is surjective, since $\dot x\in V$, there is an ordinal $\eta$ such that $\dot x$ injects into $\power^\alpha(\eta)$, and so $F$ can be extended to $\power^\alpha(\eta)$ via precomposition.
\end{proof}
As an immediate corollary, if $V\models\KWP_\alpha$, then $V[G]\models\KWP_\alpha^*$ and therefore $\KWP_{\alpha+1}$, for any generic filter $G$. On the other hand, Monro showed that there is a generic extension of Cohen's first model in which there is an amorphous set \cite{Monro:Generic}. Since Cohen's first model satisfies $\KWP_1$, and amorphous sets cannot be linearly ordered, it must be that Monro's generic extension satisfies $\KWP_1^*+\lnot\KWP_1$. However, by the above proposition, any further extension must satisfy $\KWP_1^*$, since generic extensions of generic extensions are themselves generic extensions.

In the modal logic of forcing, $\KWP_\alpha^*$ is a button. Once it is true, it must remain true in any further generic extension (it may be true that for some $\beta<\alpha$, $\KWP_\beta^*$ holds in a generic extension, of course). The converse statement is not necessarily true, of course, since if $c$ is a Cohen real over $L$, then $L[c]\models\KWP^*_0$, but it has grounds where $\KWP_\alpha^*$ fails for arbitrarily high $\alpha$, as shown in \cite{Karagila:2020}. On the other hand, as the following theorem shows, the failure cannot be complete.

\begin{theorem}\label{thm:kwp-down-abs}
  If $V[G]\models\KWP_\alpha^*$, where $G\subseteq\PP\subseteq\power^\beta(\Ord)$ is $V$-generic, then $V\models\KWP_{\beta+\alpha}^*$.
\end{theorem}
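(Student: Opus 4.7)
The plan is to lift the surjection $G'\colon \power^\alpha(\eta)^{V[G]} \to x$ provided by the hypothesis (for any fixed non-empty $x \in V$) back to $V$, by replacing each element of $\power^\alpha(\eta)^{V[G]}$ with a $\PP$-name for it and using the forcing relation to read off the value of $G'$. First I would fix a $\PP$-name $\dot G'$ for $G'$ and a condition $p \in G$ forcing ``$\dot G'$ is a surjection from $\power^\alpha(\check\eta)$ onto $\check x$''.

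The core construction is, in $V$, a recursive hierarchy of nice-name sets: $N_0 = \{\check\xi \mid \xi < \eta\}$, $N_{\gamma+1} = \power(\PP \times N_\gamma)$, and $N_\lambda = \bigcup_{\zeta<\lambda}N_\zeta$ at limits. I would prove by simultaneous induction on $\gamma \leq \alpha$ two claims: \textbf{(a)} every $u \in \power^\gamma(\eta)^{V[G]}$ has the form $\dot u^G$ for some $\dot u \in N_\gamma$, via the standard construction $\dot u = \{(q,\dot y) \mid \dot y \in N_{\gamma-1},\ q \forces \dot y \in \tau\}$ where $\tau$ is any $\PP$-name for $u$; and \textbf{(b)} in $V$, $N_\gamma$ injects into $\power^{\beta+\gamma}(\Ord)$, using $\PP \hookrightarrow \power^\beta(\Ord)$ and the closure of each $\power^\delta(\Ord)$ under Cartesian products (as noted in the preliminaries). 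At a limit $\lambda$ one uses that $\beta+\lambda$ is also a limit to amalgamate the earlier bounds via $\bigcup_{\zeta<\lambda}\power^{\beta+\zeta}(\eta^*) \subseteq \power^{\beta+\lambda}(\eta^*)$, where $\eta^* = \sup_{\zeta<\lambda}\eta_\zeta$.

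Then I define in $V$ the partial map $f\colon N_\alpha \times \PP \to x$ by $f(\dot u, q) = v$ iff $v \in x$, $q \leq p$, and $q \forces \dot G'(\dot u) = \check v$; this is well-defined in $V$ because forcing is definable there. Surjectivity is immediate from (a) and the Truth Lemma: for each $v \in x$, pick $u$ with $G'(u) = v$, a nice name $\dot u \in N_\alpha$ for $u$, and a $q \in G$ below $p$ forcing $\dot G'(\dot u) = \check v$. Since $N_\alpha \times \PP$ injects in $V$ into $\power^{\beta+\alpha}(\Ord)$ --- combining (b) with $\power^\beta(\Ord) \hookrightarrow \power^{\beta+\alpha}(\Ord)$ and the product-absorbing closure --- composing with $f$ yields a partial surjection $\power^{\beta+\alpha}(\eta'') \to x$ in $V$ for some $\eta''$, which is exactly $\KWP_{\beta+\alpha}^*$ applied to $x$.

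The main obstacle is the bookkeeping in (b): at limit stages the previously chosen injections $N_\zeta \hookrightarrow \power^{\beta+\zeta}(\eta_\zeta)$ must amalgamate into a single injection of $N_\lambda$ into $\power^{\beta+\lambda}(\eta^*)$. This rests on the easy auxiliary fact, provable by induction on $\delta$, that $\power^\delta(\eta_1) \subseteq \power^\delta(\eta_2)$ whenever $\eta_1 \leq \eta_2$, so one may take $\eta^* = \sup_{\zeta<\lambda}\eta_\zeta$. Everything else reduces to the standard definability of forcing and the Truth Lemma.
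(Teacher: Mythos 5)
Your proposal is correct and follows essentially the same route as the paper: the paper defines the identical hierarchy of name-classes ($S_0=\{\check\xi\mid\xi\in\Ord\}$, $S_{\delta+1}=\power(\PP\times S_\delta)$, unions at limits), bounds it by $\power^{\beta+\delta}(\Ord)$ using $\PP\subseteq\power^\beta(\Ord)$ (via a surjection $\tau_\delta$ rather than your injection, an immaterial difference for $\leq^*$), and defines the same partial map $g(q,s)=y\iff q\forces\dot f(\tau_\alpha(s))=\check y$. The only cosmetic differences are that you localise the hierarchy to $\eta$ instead of all of $\Ord$ and spell out the limit-stage amalgamation, which the paper leaves implicit.
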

\begin{proof}
  We define a sequence of classes of $\PP$-names, $S_\delta$, by recursion:
  \begin{enumerate}
  \item $S_0=\{\check\xi\mid\xi\in\Ord\}$,
  \item $S_{\delta+1}=\power(\PP\times S_\delta)$,
  \item $S_\delta=\bigcup_{\gamma<\delta}S_\gamma$ for a limit ordinal $\delta$.
  \end{enumerate}

  If $a\in\power^\delta(\Ord)^{V[G]}$, then there is some $\dot a\in S_\delta$ such that $\dot a^G=a$. Moreover, in $V$ there is a definable surjection $\tau_\delta\colon\power^{\beta+\delta}(\Ord)\to S_\delta$, since $\PP\subseteq\power^\beta(\Ord)$.

  Now letting $x\in V$ be an arbitrary set, there are $\PP$-names $\dot a\in S_{\alpha+1}$ and $\dot f$, and a condition $p\in G$ such that $p\forces``\dot f\colon\dot a\to\check x$ is a surjection''. Let $g$ be the function $g(q,s)=y$ if and only if $q\forces\dot f(\tau_\alpha(s))=\check y$, then $g$ is a surjection from a subset of $\PP\times\power^{\beta+\alpha}(\Ord)$ onto $x$. So $V\models\KWP_{\beta+\alpha}^*$, as wanted.
\end{proof}
\begin{corollary}
  Suppose that $V\models\lnot\KWP_{\beta+\alpha}$ and $G\subseteq\PP\subseteq\power^\beta(\Ord)$ is a $V$-generic filter, then $V[G]\models\lnot\KWP_\alpha$. In particular, you cannot force the Axiom of Choice with a well-orderable forcing from a model of $\ZF+\lnot\AC$.\footnote{Note that the latter is also a consequence of the fact that a well-orderable forcing must preserve empty products. That is, if $\prod\{A_i\mid i\in I\}=\varnothing$, then adding a generic for a well-orderable forcing must preserve that.}
\end{corollary}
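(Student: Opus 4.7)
I would prove the corollary as the contrapositive of \autoref{thm:kwp-down-abs}. Suppose $V[G] \models \KWP_\alpha$; it suffices to derive $V \models \KWP_{\beta+\alpha}$, contradicting the hypothesis. Since every injection of $x$ into $\power^\alpha(\eta)$ furnishes a partial surjection from $\power^\alpha(\eta)$ onto $x$, we have $\KWP_\alpha \to \KWP^*_\alpha$ in any model, so $V[G] \models \KWP^*_\alpha$. The theorem then applies to give $V \models \KWP^*_{\beta+\alpha}$, from which we conclude $V \models \KWP_{\beta+\alpha}$.

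For the ``in particular'' clause, a well-orderable forcing $\PP$ may be taken to lie inside $\Ord = \power^0(\Ord)$, so the assertion is the special case $\beta = \alpha = 0$. Since $\KWP_0$ is the Axiom of Choice, the hypothesis $V \models \ZF + \lnot\AC$ translates to $V \models \lnot\KWP_0$, and the conclusion $V[G] \models \lnot\KWP_0$ is precisely $V[G] \models \lnot\AC$.

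The only step requiring a bit of attention is the passage from $V \models \KWP^*_{\beta+\alpha}$ to $V \models \KWP_{\beta+\alpha}$, since in general these principles may differ at successor ordinals. For $\beta+\alpha = 0$ or a limit, this is immediate from the lemma preceding the definition of $\KWP_\alpha$. For the successor case, one repeats the argument of \autoref{thm:kwp-down-abs} starting with an injection $\dot f \colon \check x \to \power^\alpha(\check\eta)$ in $V[G]$ rather than a partial surjection, and examines the $V$-definable map $y \mapsto \{(q,s)\in\PP\times\power^{\beta+\alpha}(\Ord) : q \forces \dot f(\check y) = \tau_\alpha(s)\}$; injectivity drops out of genericity, exactly as in the original proof. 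This last verification is the only place where even a moderate obstacle might arise, but it is entirely routine once the theorem's machinery is in place.
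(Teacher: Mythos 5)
Your overall strategy---taking the contrapositive of \autoref{thm:kwp-down-abs} and then worrying about closing the gap between $\KWP^*_{\beta+\alpha}$ and $\KWP_{\beta+\alpha}$---is the right one; the paper states this corollary without any proof, treating it as immediate, so you deserve credit for spotting that the passage from the starred to the unstarred principle is only free when $\beta+\alpha$ is $0$ or a limit. The ``in particular'' clause is handled correctly. However, your repair of the successor case does not work as written. The map $y\mapsto\{(q,s)\in\PP\times\power^{\beta+\alpha}(\Ord):q\forces\dot f(\check y)=\tau_\alpha(s)\}$ sends $y$ to a \emph{subset} of $\PP\times\power^{\beta+\alpha}(\Ord)$; after G\"odel pairing this is an element of $\power^{\beta+\alpha+1}(\Ord)$, not of $\power^{\beta+\alpha}(\Ord)$. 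Injectivity is not the problem (that part of your argument is fine); the problem is that your injection lands one power set level too high, so it only re-proves $V\models\KWP_{\beta+\alpha+1}$, which you already had from $V\models\KWP^*_{\beta+\alpha}$.

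The fix, when $\alpha=\alpha_0+1$ is itself a successor, is to code $f(y)$ by its forced \emph{members} rather than by the names forced to \emph{equal} it: with $p\in G$ forcing that $\dot f$ is an injection of $\check x$ into $\power^{\alpha_0+1}(\check\eta)$, set
\[ g(y)=\{(q,s)\in\PP\times\power^{\beta+\alpha_0}(\Ord)\mid q\leq p,\ q\forces\tau_{\alpha_0}(s)\in\dot f(\check y)\}. \]
Now $g(y)$ is a subset of $\PP\times\power^{\beta+\alpha_0}(\Ord)$, which G\"odel pairing identifies with a subset of $\power^{\beta+\alpha_0}(\Ord)$, that is, an element of $\power^{\beta+\alpha}(\Ord)$ as required; and $g$ is injective because if $f(y_1)\neq f(y_2)$ then some $b=\tau_{\alpha_0}(s)^G$ lies in one and not the other, and some $q\in G$ below $p$ forces this, putting $(q,s)$ into exactly one of $g(y_1)$ and $g(y_2)$. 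Note that this covers every successor value of $\beta+\alpha$ except the corner case $\alpha=0$ with $\beta$ a successor, where there is no lower level to drop to; there the argument (yours or the one above) only yields $V\models\KWP^*_\beta$, and the corollary as stated is not obviously justified in that case either---though this does not affect the ``in particular'' clause, where $\alpha=\beta=0$ and $\KWP^*_0$ is already equivalent to $\AC$.
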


This means that $\KWP$ is an absolute truth not only in the generic multiverse, but indeed in the symmetric multiverse. Grigorieff showed in \cite{Grigorieff:Intermediate} that if $V\subseteq W\subseteq V[G]$, where $W$ is a symmetric extension of $V$, then $V[G]$ is a generic extension of $W$. Therefore, if $V\models\KWP$, so must $V[G]$, and therefore $W$ must satisfy this as well. The Bristol model is an example of an intermediate model between $L$ and $L[c]$, where $c$ is a Cohen real, where $\KWP$ fails. Of course, the Bristol model is not of the form $L(x)$ for any set $x$, which leads to the very natural conjecture made by the first author in \cite[Conjecture~8.9]{Karagila:2020}.

\begin{conjecture}[The Kinna--Wagner Conjecture]
  Suppose that $V\models\KWP$ and $G$ is a $V$-generic filter. If $M$ is an intermediate model between $V$ and $V[G]$ and $M\models\KWP$, then $M=V(x)$ for some set $x$.
\end{conjecture}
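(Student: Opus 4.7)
Fix $\alpha,\beta$ with $M \models \KWP^*_\alpha$ and $V \models \KWP^*_\beta$, set $\gamma := \max\{\alpha,\beta\}$, and assume without loss of generality that $\PP \subseteq \power^\gamma(\Ord)^V$. Both $V$ and $M$ then satisfy $\KWP^*_\gamma$, and \autoref{cor:kwp-iterated-pset} gives $M = L(\power^{\gamma+1}(\Ord)^M)$. By \autoref{lemma:a-set-gen} it is enough to exhibit \emph{any} set $x \in M$ with $V(x) = M$: such an $x$ can then be refined to one in $\power^{\alpha+1}(\Ord)^M$, recovering the uniformity demanded by the theorem statement.

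\textbf{Candidate and approach.} The candidate I would try is $x := M_\theta$, the $\theta$-th level of $M$'s cumulative hierarchy, for a sufficiently large ordinal $\theta$ depending on $\rank^V(\PP)$ and $\gamma$. Trivially $V(M_\theta) \subseteq M$; the work lies in the reverse inclusion. Writing $T_\eta := \power^{\gamma+1}(\eta)^M$ and using the canonical $\PP$-name class $S_{\gamma+1}$ introduced in the proof of \autoref{thm:kwp-down-abs}, each element of $T_\eta$ equals $\dot Y^G$ for some $\dot Y \in S_{\gamma+1}$. Hence the entire class $T := \bigcup_\eta T_\eta$ is recovered from the selection class $\mathcal{N}_M := \{\dot Y \in S_{\gamma+1} : \dot Y^G \in M\}$, and if $\mathcal{N}_M$ can be defined inside $V(M_\theta)$ from $M_\theta$ and $V$-parameters, then $T \subseteq V(M_\theta)$, so \autoref{thm:BVM} forces $V(M_\theta) = M$.

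\textbf{Main step and obstacle.} The crux is a localisation claim: that $\mathcal{N}_M$ is $V$-definable from its restriction $\mathcal{N}_M \cap V_{\theta_0}^V$ for some $\theta_0$ depending only on $\PP$ and $\gamma$. Granted this, for any $\theta > \theta_0$ the restricted selection class is readable inside $V(M_\theta)$ via the correspondence between names of rank below $\theta_0$ and their evaluations, which all land in $M_\theta$. In the $\ZFC$ case this localisation is the classical statement that any intermediate $\ZFC$ model is generated by the generic filter of a complete subforcing of $\PP$; what makes the $\KWP$ version plausible is that $\KWP^*_\alpha$ in $M$ provides the uniform coding underlying \autoref{thm:BVM}, which should confine the ``symmetric-extension-like'' data witnessing $M$ over $V$ to bounded rank in the name hierarchy. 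Executing this localisation without $\AC$ is the main technical difficulty: one must use $\KWP^*_\alpha$ to rule out that genuinely new symmetric patterns emerge at arbitrarily high levels of $\power^{\gamma+1}(\Ord)^M$, which is precisely the pathology that allows the Bristol model to evade $\KWP$ and that the hypothesis $M \models \KWP$ is designed to forbid.
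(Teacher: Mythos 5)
Your setup is sound---reducing via \autoref{lemma:a-set-gen} to finding \emph{any} set $x$ with $V(x)=M$, invoking \autoref{cor:kwp-iterated-pset} to get $M=V(\power^{\alpha+1}(\Ord)^M)$, and observing that a level $M_\theta$ would a posteriori serve as a generator---but the argument has a genuine gap exactly where you flag ``the main technical difficulty.'' The localisation claim, that the name class $\mathcal{N}_M$ is determined by its restriction to some bounded rank $\theta_0$, is not a step you can defer: it is essentially the conjecture itself in disguise, since granting a uniform bound $\theta_0$ immediately yields a set generating $M$ over $V$. You give no mechanism by which $\KWP^*_\alpha$ in $M$ produces such a bound; \autoref{thm:BVM} only tells you that $M$ is generated over $V$ by the proper class $\power^{\alpha+1}(\Ord)^M$, and says nothing about confining that class's generating power to a set. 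The Hayut--Shani and Bristol constructions show that for a general intermediate $M$ the new data really does appear cofinally in rank, so the whole content of the theorem lies in the step you leave unexecuted.

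The paper closes this gap by induction on $\delta$ through the classes $X_\delta=\power^\delta(\Ord)^M$, showing at each stage that $V(X_\delta)=V(z)$ for a \emph{set} $z\subseteq\power^{\beta+\delta}(\eta'_\delta)$. The successor step is where the work happens: by Grigorieff, $V[G]$ is a generic extension of $V(z)$ by some complete Boolean algebra $\QQ\in V(z)$; by \autoref{prop:rel-KWP}, $V(z)\models\KWP^*_{\beta+\delta}$, so $\QQ$ is the surjective image of $\power^{\beta+\delta}(\eta)^{V(z)}$ for some ordinal $\eta$; and then \autoref{lemma:cBA-subset} trades each $x\in X_{\delta+1}$ (a subset of $V(z)$) for some $y\subseteq\power^{\beta+\delta}(\eta)$ with $V(z,x)=V(z,y)$. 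Taking suprema of the relevant ordinals over all candidate $z$'s defines the bounds $\eta'_{\delta+1}$ in advance, which is exactly what your $\theta_0$ would have to be. If you want to salvage your outline, this dense-image/complete-subalgebra argument is the missing mechanism: it is how $\KWP^*$ in the intermediate model converts ``arbitrary new subset of $V(z)$'' into ``subset of a fixed set,'' i.e.\ the localisation you were hoping for.
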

\subsection{Proof of the Kinna--Wagner Conjecture}
We begin with a lemma, before proving the Kinna--Wagner Conjecture.
\begin{lemma}\label{lemma:cBA-subset}
  Let $W\models\ZF$ and let $\QQ\in W$ be a forcing notion, $f\colon X\to\QQ$ a function whose image is dense, and $H\subseteq\QQ$ is a $W$-generic filter. For every $x\in W[H]$ such that $x\subseteq W$, there is $y\subseteq X$ such that $W(x)=W(y)$.
\end{lemma}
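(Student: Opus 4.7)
The plan is to fix a $\QQ$-name $\dot x \in W$ for $x$ together with a set $Y \in W$ with $x \subseteq Y$ (take $Y = W_\alpha$ for any ordinal $\alpha$ bounding the ranks of the elements of $x$; this exists because $\Ord^W = \Ord^{W[H]}$ and $x$ is a set of elements of $W$). I would then define
\[
y = \bigl\{z \in X : \forall w \in Y,\ (f(z) \forces \check w \in \dot x \to w \in x) \wedge (f(z) \forces \check w \notin \dot x \to w \notin x)\bigr\}.
\]
Informally, $z \in y$ iff $f(z)$ forces no statement about membership in $\dot x$ over $Y$ that disagrees with $x$. Since the forcing relation is definable in $W$ and the remaining parameters $X, Y, f, \dot x, \QQ$ all lie in $W$, Separation gives $y \in W(x)$.

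For the other direction, I would show $x \in W(y)$ by verifying the formula
\[
x = \{w \in Y : \exists z \in y,\ f(z) \forces \check w \in \dot x\}.
\]
The $\supseteq$ direction follows directly from the consistency clauses in the definition of $y$. For $\subseteq$, given $w \in x$, pick $q \in H$ with $q \forces \check w \in \dot x$. Since $f[X]$ is dense in $\QQ$, the set $\{p \in f[X] : p \leq q\}$ is dense below $q$, so by genericity of $H$ there is $z \in X$ with $f(z) \in H$ and $f(z) \leq q$. Then $f(z) \forces \check w \in \dot x$, and since $f(z) \in H$, every forcing assertion made by $f(z)$ is true in $W[H]$, so both consistency clauses are satisfied and $z \in y$.

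I expect this witness-finding step to be the main obstacle. The definition of $y$ is quite restrictive, and without the density hypothesis on $f[X]$ there might be no element of $X$ witnessing $w \in x$ via $\forces \check w \in \dot x$ while simultaneously remaining consistent with $x$. Density is precisely the tool that allows us to refine any generic $q \in H$ to an $f(z)$ that inherits its forcing strength and automatically belongs to $H$. Once both inclusions $y \in W(x)$ and $x \in W(y)$ are established, $W(x) = W(y)$ follows since $W(\cdot)$ is the smallest model of $\ZF$ containing $W$ and the given set.
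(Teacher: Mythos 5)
Your proof is correct, and it takes a genuinely different route from the paper's. The paper passes to the Boolean completion of $\QQ$, forms the complete subalgebra $\QQ_{\dot x}$ generated by the values $[\![\check z\in\dot x]\!]$, invokes (choiceless versions of) Jech's restriction lemmas to get $W[x]=W[H\cap\QQ_{\dot x}]$, and then pulls the filter $H\cap\QQ_{\dot x}$ back through the projection and through $f$ to obtain $y\subseteq X$. Your argument bypasses the Boolean-algebraic machinery entirely: you code $x$ by the set of $z\in X$ whose forcing decisions about $\dot x$ over $Y$ are consistent with $x$ --- manifestly definable in $W(x)$ since the forcing relation is definable in $W$ --- and you recover $x$ from $y$ via the truth lemma, using density of $f[X]$ together with genericity of $H$ to find, below any $q\in H$ forcing $\check w\in\dot x$, a witness $f(z)\in H$ (and membership of $f(z)$ in $H$ automatically puts $z$ into $y$). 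All the steps check out: the bounding set $Y$ exists because $W$ and $W[H]$ share their ordinals, both inclusions of the recovery formula are verified, and mutual definability yields $W(x)=W(y)$ by minimality of the $W(\cdot)$ operation. Your approach is more elementary and self-contained, proving exactly the stated conclusion; what it does not give is the extra structural information, implicit in the paper's route, that $W(y)$ is itself a generic extension of $W$ by a complete subalgebra of (the completion of) $\QQ$. Both proofs use the density of $f[X]$ at the same essential point, namely transferring the relevant information carried by $H$ down to a subset of $X$.
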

\begin{proof}
  It is enough to prove this in the case where $\QQ$ is a complete Boolean algebra, since every forcing notion embeds densely into its Boolean completion. Let $\dot x$ be a $\QQ$-name such that $\dot x^H=x$, and let $\QQ_{\dot x}$ be the complete subalgebra generated by the conditions of the form $[\![\check z\in\dot x]\!]$ for $z\in V$. Then $W[H\cap\QQ_{\dot x}]=W[x]$, see the proof of \cite[Corollary~15.42]{Jech:ST}.\footnote{The proof in Jech is in the context of $\ZFC$, but the Axiom of Choice is not used in the proof beyond the fact that $\KWP_0$ and the Balcar--Vop\v{e}nka theorem reduce all cases to sets of ordinals and that sets of ordinals are subsets of $V$.}

  Let $\pi\colon\QQ\to\QQ_{\dot x}$ be the projection map given by $\pi(q)=\inf\{p\in\QQ_{\dot x}\mid q\leq p\}$. This is well-defined since $\QQ_{\dot x}$ is a complete subalgebra, so the computation of the infimum is the same in both $\QQ$ and $\QQ_{\dot x}$. Since $\pi``f``X$ is dense in $\QQ_{\dot x}$, by \cite[Lemma~15.40]{Jech:ST}, \[W[x]=W[H\cap\QQ_{\dot x}]=W[H\cap\pi``f``X],\] and therefore taking $y=f^{-1}(\pi^{-1}(H\cap\QQ_{\dot x}))\subseteq X$ and we get that $W[x]=W[y]$.
\end{proof}
One can view this lemma from a different direction: we use $f$ to define a quasi-order on $X$ which forcing equivalent to $\QQ$, and then apply the restriction lemmas.

We can now proceed with our proof of the Conjecture. We will, in fact, prove a stronger form of it.

\begin{theorem}\label{thm:KWC}
  There is a definable sequence $\tup{\eta_\alpha\mid\alpha\in\Ord}$ in $V[G]$, using $V$ as a predicate, such that given any intermediate model $V\subseteq M\subseteq V[G]$ of $\KWP_\alpha^*$, then $M=V(x)$, for some $x\subseteq\power^\alpha(\eta_\alpha)$.
\end{theorem}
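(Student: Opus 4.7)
The plan is to use \autoref{lemma:cBA-subset} together with the $\KWP^*_\alpha$-hypothesis on $M$ and \autoref{lemma:a-set-gen} to produce $x\subseteq\power^\alpha(\eta_\alpha)$ with $V(x)=M$, where $\eta_\alpha$ is a uniform bound derived from the fixed data $V$, $\PP$, $G$. For the setup, replace $\PP$ by its Boolean completion in $V$. Since $V\models\KWP$, let $\beta_0$ be the least ordinal with $V\models\KWP^*_{\beta_0}$ and fix a surjection $\sigma\colon\power^{\beta_0}(\delta_0)\to\PP$ in $V$. By \autoref{prop:kwp-up-abs}, $V[G]\models\KWP^*_{\beta_0}$, and applying \autoref{lemma:a-set-gen} to $V[G]=V(G)$ yields a canonical $y_0\subseteq\power^{\beta_0}(\eta_0)$ with $V[G]=V(y_0)$. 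Define $\eta_\alpha$ in $V[G]$ using $V$ as a predicate; the key case is $\alpha\geq\beta_0$, where $\eta_\alpha=\eta_0$ suffices, and for $\alpha<\beta_0$ take a suitable larger canonical ordinal, e.g.\ the least $\eta$ such that $\power^\alpha(\eta)^{V[G]}$ admits a surjection onto $\power^{\beta_0+1}(\delta_0)$ in $V[G]$.

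Given an intermediate model $M\models\KWP^*_\alpha$, the first and main step is to show $M=V(w)$ for some set $w\in M$. Apply \autoref{lemma:cBA-subset} (with $W=V$, $\QQ=\PP$, $X=\power^{\beta_0}(\delta_0)$, $f=\sigma$) to each $z\in M$ with $z\subseteq V$, obtaining $y_z\in V(z)\subseteq M$, $y_z\subseteq X$, with $V(y_z)=V(z)$. Using \autoref{cor:kwp-iterated-pset} (so $M=L(\power^{\alpha+1}(\Ord)^M)$) together with the Balcar--Vop\v{e}nka--Monro coding of \autoref{thm:BVM}, every set in $M$ reduces via the $\KWP^*_\alpha$-surjections to nested relations on subsets of $V$. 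By Separation in $M$ (with $V$ as predicate), $Y_M=\{y\in\power(X)^M:\exists z\in M\cap\power(V),\,V(y)=V(z)\}$ is a set in $M$, and the crucial claim is $V(Y_M)=M$. Applying \autoref{lemma:a-set-gen} to $M=V(Y_M)$ then produces $M=V(x)$ with $x\subseteq\power^\alpha(\eta_M)$. Since $Y_M\subseteq\power^{\beta_0+1}(\delta_0)^V$ and $V\subseteq M$, we have $\tcl(\{Y_M\})\subseteq V\cup\power^{\beta_0+1}(\delta_0)^V$, and the defining property of $\eta_\alpha$ combined with $M\models\KWP^*_\alpha$ forces $\eta_M\leq\eta_\alpha$.

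The main obstacle is the assertion $V(Y_M)=M$, i.e., that $M$ is generated over $V$ by a single set: this is essentially the Kinna--Wagner Conjecture, and the Bristol model exemplifies its failure when $\KWP$ fails in $M$. The $\KWP^*_\alpha$-hypothesis enters crucially through the recursive unfolding that represents every set in $M$ via subsets of $V$ (collected in $\power(V)^M$, and then transferred to $Y_M$ via \autoref{lemma:cBA-subset}). The uniformity of $\eta_\alpha$ across all intermediate $M$ follows because every $y_z$ lives in the same fixed $V$-set $\power(X)=\power^{\beta_0+1}(\delta_0)^V$, independent of $M$.
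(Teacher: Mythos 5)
There is a genuine gap, and you have in fact flagged it yourself: the assertion $V(Y_M)=M$ is precisely the content of the theorem, and nothing in your argument establishes it. The single application of \autoref{lemma:cBA-subset} with $W=V$ only handles sets $z\subseteq V$, i.e.\ essentially the first power-set level $\power(V)^M$. But for $M\models\KWP_\alpha^*$ with $\alpha>0$, the Balcar--Vop\v{e}nka--Monro coding reduces $M$ to $\power^{\alpha+1}(\Ord)^M$, whose elements are subsets of $\power^\alpha(\Ord)^M$ --- sets of \emph{new} sets, not subsets of $V$ --- so collecting the $y_z$ for $z\in\power(V)^M$ into $Y_M$ does not capture $M$, and there is no reason why $V(Y_M)$ should exhaust $M$. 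Relatedly, your claim that the bound is uniform ``because every $y_z$ lives in the same fixed $V$-set $\power(X)$'' only works at this first level.

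The missing idea is a transfinite induction on the levels $X_\delta=\power^\delta(\Ord)^M$, showing $V(X_\delta)=V(z_\delta)$ for a single set $z_\delta$ of controlled rank. At the successor step one does \emph{not} work over $V$: having $V(X_\delta)=V(z)$, one invokes Grigorieff's theorem to see that $V[G]$ is a generic extension of $V(z)$ by some complete Boolean algebra $\QQ\in V(z)$, uses \autoref{prop:rel-KWP} to get a surjection from some $\power^{\beta+\delta}(\eta)^{V(z)}$ onto $\QQ$, and only then applies \autoref{lemma:cBA-subset} with $W=V(z)$ to each $x\in X_{\delta+1}$ (which is a subset of $X_\delta\subseteq V(z)$), replacing it by some $y$ of bounded rank. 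This is why the ordinal bound $\eta'_{\delta+1}$ must itself be defined by a recursion taking a supremum over all candidate sets $z$ at the previous level --- the forcing $\QQ$ and the surjection onto it genuinely depend on $z$ --- rather than being read off once from $V$, $\PP$ and $G$ as in your setup. Your toolkit (\autoref{lemma:cBA-subset}, \autoref{lemma:a-set-gen}, Grigorieff, \autoref{cor:kwp-iterated-pset}) is the right one, but the engine that climbs the power-set hierarchy one step at a time, changing the base model at each step, is absent.
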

As the definability of the ground model is still an open problem in $\ZF$ at the time of writing this paper, it is not even clear if $V$ itself is definable, which is why we must add it as a predicate. Note that even in this context, we do not require (a priori) that $M$ is definable in $V[G]$ (even with $V$ as a predicate), although that is a consequence of the theorem.

\begin{proof}[Proof of \autoref{thm:KWC}]
  We begin by defining a different sequence, $\tup{\eta'_\alpha\mid\alpha\in\Ord}$ in $V[G]$, using $V$ as a predicate. Define $\eta'_0=0$, for a limit ordinal $\alpha$ we let $\eta'_\alpha=\sup\{\eta'_\beta\mid\beta<\alpha\}$. Suppose the $\eta'_\alpha$ was defined, let us define $\eta_{\alpha+1}'$.

First let $\beta$ be the least such that $V\models\KWP_\beta^*$. Next, for each $z\subseteq\power^{\beta+\alpha}(\eta_\alpha')$, there is a complete Boolean algebra $\QQ\in V(z)$ and a $V(z)$-generic filter, $H$, such that $V(z)[H]=V[G]$. By \autoref{prop:rel-KWP}, $V(z)\models\KWP_{\beta+\alpha}^*$, and therefore there is an ordinal $\eta$ and a surjection from $\power^{\beta+\alpha}(\eta)^{V(z)}$ onto $\QQ$ in $V(z)$, we let $\eta^z$ be the least such ordinal. Then, let $\eta'_{\alpha+1}=\sup\{\eta^z\mid z\subseteq\power^{\beta+\alpha}(\eta'_\alpha)\}$.

Let $M$ be an intermediate model of $\KWP^*_\alpha$. We claim that $M=V(z)$ for some $z\subseteq\power^{\beta+\alpha}(\eta'_{\alpha+1})^M$. Let $X_\delta$ denote $\power^\delta(\Ord)^M$. First, note that $V(X_\delta)\models\KWP_{\beta+\delta}$, and since $M\models\KWP^*_\alpha$, $M=L(X_{\alpha+1})=V(X_{\alpha+1})$. We claim that $V(X_\delta)=V(z)$ for some $z\subseteq\power^{\beta+\delta}(\eta'_\delta)$. We prove this claim by induction on $\delta$. For $\delta=0$, this is trivial, $V(X_0)=V$ so we can take $z=\varnothing$. If $\delta$ is a limit, by the continuity of $X_\delta$, it follows that for $\gamma<\delta$, since $V(X_\gamma)=V(z_\gamma)$ for some $z_\gamma$, then $V(X_\gamma)=V(\power^{\beta+\gamma+1}(\eta'_\gamma)^M)$. Therefore, $V(X_\delta)=V(\power^{\beta+\delta}(\eta'_\delta)^M)$ as wanted.

It remains to prove this in the successor case for $\delta+1$. Note that $V(X_\delta)=V(z)$ for some $z\subseteq\power^{\beta+\delta}(\eta_\delta')^M$, therefore we can fix some $\QQ\in V(z)$ such that for some $V(z)$-generic filter, $H$, $V(z)[H]=V[G]$, by the choice of $\eta_{\delta+1}'$ there is a surjection from $\power^{\beta+\delta}(\eta_{\delta+1}')^{V(z)}$ onto $\QQ$ in $V(z)$. By \autoref{lemma:cBA-subset}, for every $x\in X_{\delta+1}$, since $x\subseteq V(z)$, there is some $y\subseteq\power^{\beta+\delta}(\eta_{\delta+1}')$ such that $V(z,x)=V(z,y)$. Therefore, \[V(X_{\delta+1})\subseteq V(\power^{\beta+\delta+1}(\eta_{\delta+1}')^M).\]
Therefore, $V(X_{\delta+1})=V(\{y\subseteq\power^{\beta+\delta}(\eta_{\delta+1}')^M\mid V(z')\subseteq V(X_{\delta+1})\})$.

Finally, we define the sequence $\tup{\eta_\alpha\mid\alpha\in\Ord}$. If $M\models\KWP_\alpha^*$ and $M=V(z)$, by \autoref{lemma:a-set-gen} there is a minimal $\eta^z$ and $y\subseteq\power^\alpha(\eta^z)$ such that $M=V(y)$. We let \[\eta_\alpha=\sup\{\eta^z\mid z\subseteq\power^{\beta+\alpha}(\eta_{\alpha+1}'), V(z)\models\KWP_\alpha^*\}.\]

Given any intermediate $M\models\KWP_\alpha^*$, we have by the first part that $M=V(z)$ for some $z\subseteq\power^{\beta+\alpha}(\eta_{\alpha+1}')$, and by the definition of $\eta_\alpha$, there is some $y\subseteq\power^\alpha(\eta_\alpha)$ such that $M=V(y)$.
\end{proof}
\section{Generalisation and applications}
\begin{theorem}
  Suppose that $V\subseteq M\subseteq V[G]$ are models of $\ZF$, where $G\subseteq\PP$ is a $V$-generic filter, then if $M\models\KWP_\alpha^*$, then $M=V(\power^{\alpha+1}(\PP^{<\omega})^M)$.
\end{theorem}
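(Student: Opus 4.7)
Set $W := V(\power^{\alpha+1}(\PP^{<\omega})^M)$; the inclusion $W \subseteq M$ is immediate because $V \subseteq M$ and $\power^{\alpha+1}(\PP^{<\omega})^M \in M$. The plan for the reverse inclusion is to mirror the proof of \autoref{thm:KWC}, with $\PP^{<\omega}$ taking on the role played there by $\power^{\beta+\delta}(\eta_\delta')$.

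By \autoref{cor:kwp-iterated-pset}, $M = L(\power^{\alpha+1}(\Ord)^M)$. Writing $X_\delta = \power^\delta(\Ord)^M$, the main claim to be proved by induction on $\delta \leq \alpha + 1$ is: $V(X_\delta) = V(z_\delta)$ for some set $z_\delta \subseteq \power^\delta(\PP^{<\omega})^M$. The base case $\delta = 0$ takes $z_0 = \varnothing$, and limit stages follow by continuity exactly as in \autoref{thm:KWC}.

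For the successor step, the inductive hypothesis gives $V(X_\delta) = V(z_\delta)$, so each $x \in X_{\delta+1}$ is a subset of $V(z_\delta)$. Following the argument of \autoref{thm:KWC}, fix a forcing $\QQ \in V(z_\delta)$ and a $V(z_\delta)$-generic filter $H$ with $V(z_\delta)[H] = V[G]$. Since $\PP \in V \subseteq V(z_\delta)$ and $\PP$ embeds densely into the Boolean completion of $\QQ$, one obtains in $V(z_\delta)$ a function $f\colon\power^\delta(\PP^{<\omega})^{V(z_\delta)} \to \QQ$ with dense image, by extending a canonical map sending iterated singletons of elements of $\PP^{<\omega}$ into $\PP$. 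Applying \autoref{lemma:cBA-subset} with $X = \power^\delta(\PP^{<\omega})^{V(z_\delta)}$ then produces for each $x \in X_{\delta+1}$ a subset $y_x \subseteq \power^\delta(\PP^{<\omega})^{V(z_\delta)}$ with $V(z_\delta, x) = V(z_\delta, y_x)$. Setting $z_{\delta+1} = z_\delta \cup \{y_x : x \in X_{\delta+1}\}$ gives $z_{\delta+1} \subseteq \power^{\delta+1}(\PP^{<\omega})^M$ and $V(X_{\delta+1}) = V(z_{\delta+1})$.

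The main obstacle is producing the forcing $\QQ$ and generic $H$ with $V(z_\delta)[H] = V[G]$. In \autoref{thm:KWC} this was handled using $V \models \KWP^*_\beta$ together with \autoref{prop:rel-KWP} to bound $\QQ$ within $\power^{\beta+\delta}(\eta)^{V(z)}$; in the present setting the role of that bound is filled directly by $\PP$, given explicitly as a set in $V \subseteq V(z_\delta)$, and the iterated power set $\power^\delta(\PP^{<\omega})^{V(z_\delta)}$ provides a sufficiently large preimage for the Lemma's density requirement. At $\delta = \alpha + 1$, the induction closes with $M = V(X_{\alpha+1}) = V(z_{\alpha+1}) \subseteq V(\power^{\alpha+1}(\PP^{<\omega})^M) = W$, yielding $M = W$.
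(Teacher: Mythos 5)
Your inductive skeleton matches the paper's, but the successor step has a genuine gap at exactly the point you yourself flag as ``the main obstacle''. You assert that $\PP$ embeds densely into the Boolean completion of a forcing $\QQ\in V(z_\delta)$ satisfying $V(z_\delta)[H]=V[G]$, and then feed a map factoring through $\PP$ into \autoref{lemma:cBA-subset}. Neither half of this works. First, Grigorieff gives you \emph{some} such $\QQ$, but not one into which $\PP$ maps with dense image: the quotient needed to recover $V[G]$ over $V(z_\delta)$ depends on $z_\delta$ and in general has no dense subset that is the image of a set from $V$. (Concretely, take $V=L$, $\PP=\Add(\omega,\omega)$ and $V(z_\delta)$ Cohen's first model $L(A)$: the forcing recovering $L[G]$ over $L(A)$ must well-order the set $A$ of Cohen reals, which is Dedekind-finite in $L(A)$; a forcing with a dense subset that is the image of the countable set $\PP$ is essentially Cohen forcing and does not do that.) Second, even granting the density claim, your map sends iterated singletons into $\PP$, so its image lies inside the image of $\PP$; \autoref{lemma:cBA-subset} would then return some $y\subseteq\PP^{<\omega}$ at \emph{every} level $\delta$, proving the far stronger statement that $M=V(z)$ for a single $z\subseteq\power(\PP^{<\omega})$ --- which would render the induction on $\delta$, and the grading by $\alpha$ in \autoref{thm:KWC}, pointless. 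The ``sufficiently large preimage'' $\power^\delta(\PP^{<\omega})^{V(z_\delta)}$ does no work in your argument, because your map never genuinely uses it.

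The step you are missing is the paper's auxiliary forcing. Writing $x_\delta=\power^\delta(\PP^{<\omega})^M$, one forces further with $x_\delta^{<\omega}$ over $V[G]$ to obtain $H$, notes that $G\ast H$ is $V$-generic for $\PP\ast\dot x_\delta^{<\omega}$, and presents $V[G\ast H]$ as a generic extension of $V(x_\delta)$ by $\QQ=x_\delta^{<\omega}\ast B(\PP\ast\dot x_\delta^{<\omega})^V/\dot I$. This $\QQ$ does admit a map with dense image, but from $x_\delta^{<\omega}\times\PP\times\power^\delta(\PP^{<\omega})^V$ --- a set in bijection with $x_\delta$ itself, not with $\PP$ --- using the coding functions $\tau$ from the proof of \autoref{thm:kwp-down-abs} to name conditions of $\dot x_\delta^{<\omega}$. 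Only then does \autoref{lemma:cBA-subset} yield $y\subseteq x_\delta$, which is what places $y$ in $\power^{\delta+1}(\PP^{<\omega})^M$ and lets the induction close.
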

\begin{proof}
  Let $X_\delta=\power^\delta(\Ord)^M$, we claim that for all $\delta$, $V(X_\delta)=V(\power^\delta(\PP^{<\omega})^M)$. Since $M=V(X_{\alpha+1})$, by \autoref{cor:kwp-iterated-pset}, this will complete the proof. To simplify the notation let us denote by $x_\delta=\power^\delta(\PP^{<\omega})^M$. We use $\dot x_\delta\in V$ to denote the $\PP$-name for $x_\delta$, given by the construction in the proof of \autoref{thm:kwp-down-abs}.

  We prove the claim by induction on $\delta$. For $\delta=0$, this is trivial, since $X_0=\Ord$ and $x_0\in V$. If $\delta$ is a limit ordinal, this follows by the continuity of the sequences $X_\delta$ and $x_\delta$. It remains to prove the successor case.

  Let $x\in X_{\delta+1}$, then $x\subseteq V(X_\delta)=V(x_\delta)$. Let $H\subseteq x_\delta^{<\omega}$ be a $V[G]$-generic filter. We make the following observations.
  \begin{enumerate}
  \item $G\ast H$ is a $V$-generic filter for $\PP\ast\dot x_\delta^{<\omega}$.
  \item $V(x_\delta)[H]$ is a generic extension of $V(x_\delta)$ and it is an intermediate model between $V$ and $V[G\ast H]$. Thus, $V[G\ast H]$ is a generic extension of $V(x_\delta)[H]$ by a quotient of $B(\PP\ast\dot x_\delta^{<\omega})^V$, where $B(\cdot)$ is the Boolean completion of the forcing. In particular, there is a $x_\delta^{<\omega}$-name, $\dot I\in V(x_\delta)$, for an ideal defining this quotient of the Boolean algebra.
  \item $V[G\ast H]$ is a generic extension of $V(x_\delta)$ by $\QQ=x_\delta^{<\omega}\ast B(\PP\ast\dot x_\delta^{<\omega})^V/\dot I$.
  \end{enumerate}
  We define a map in $V(x_\delta)$ from $x_\delta^{<\omega}\times\PP\times\power^\delta(\PP^{<\omega})^V$ to $\QQ$, defined in $V(x_\delta)$, given by $\tup{c,p,y}\mapsto\tup{c,\tup{\check p,\tau(y)}_I}$, where $\tau(y)$ is the name for an element of $x_\delta$ decoded from $y$ via the same $\tau$ function as in the proof of \autoref{thm:kwp-down-abs}, and the notation $\tup{p,\tau(y)}_I$ denotes a name for the equivalence class of the condition in the quotient by $I$. It is easy to check that the image of this map is dense in $\QQ$.

  Moreover, it is to verify that $x_\delta\times\PP\times\power^\delta(\PP^{<\omega})^V$ is in bijection with $x_\delta$, so we get a function from $x_\delta$ onto $\QQ$ whose image is dense. By \autoref{lemma:cBA-subset} we have that $V(x_\delta,x)=V(x_\delta,y)$ for some $y\subseteq x_\delta$. Therefore, $V(x_{\delta+1})=V(X_{\delta+1})$.
\end{proof}
The extension of the Kinna--Wagner Conjecture given in \autoref{thm:KWC} gives us an interesting immediate corollary.
\begin{corollary}
  Suppose that $V\models\KWP$ and $V[G]$ is a generic extension of $V$, then there are only set-many intermediate models of $\KWP_\alpha^*$ for any given $\alpha$.\qed
\end{corollary}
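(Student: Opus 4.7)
The plan is to prove the stronger claim that $V(X_\delta) = V(x_\delta)$ for every ordinal $\delta$, where $X_\delta = \power^\delta(\Ord)^M$ and $x_\delta = \power^\delta(\PP^{<\omega})^M$. Since $M \models \KWP_\alpha^*$, \autoref{cor:kwp-iterated-pset} applied inside $M$ gives $M = L(X_{\alpha+1}) = V(X_{\alpha+1})$, so the claim at level $\alpha+1$ will yield $M = V(x_{\alpha+1}) = V(\power^{\alpha+1}(\PP^{<\omega})^M)$, as required. The argument proceeds by induction on $\delta$.

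The base case is trivial, since $X_0 = \Ord$ and $x_0 = \PP^{<\omega} \in V$, so $V(X_0) = V = V(x_0)$. The limit case is a continuity argument, using that both iterated-power-set sequences are continuous under unions. The substantive step is the successor case. Assume inductively that $V(X_\delta) = V(x_\delta)$. The inclusion $V(x_{\delta+1}) \subseteq V(X_{\delta+1})$ is a routine consequence of the hypothesis, so I focus on the reverse inclusion. For this it is enough to take an arbitrary $x \in X_{\delta+1}$, noting that then $x \subseteq V(x_\delta)$ and $x \in M$, and produce some $y \subseteq x_\delta$ in $M$ with $V(x_\delta, x) = V(x_\delta, y)$, which will place $x$ inside $V(x_{\delta+1})$.

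This setup is tailor-made for \autoref{lemma:cBA-subset} applied with $W = V(x_\delta)$. What I must exhibit is a forcing $\QQ \in V(x_\delta)$, a function $f \colon x_\delta \to \QQ$ whose image is dense, and a $V(x_\delta)$-generic filter through $\QQ$ whose extension contains $x$. Following the idea behind the forcing calculus in \autoref{thm:kwp-down-abs}, I pass to a further generic extension: take a $V[G]$-generic filter $H \subseteq x_\delta^{<\omega}$, the Cohen-style forcing of finite sequences out of $x_\delta$. Inside $V[G \ast H]$, the model $V(x_\delta)[H]$ appears as a generic extension of $V(x_\delta)$ via $x_\delta^{<\omega}$, and $V[G \ast H]$ then appears as a generic extension of $V(x_\delta)[H]$ via a quotient of the $V$-Boolean completion $B(\PP \ast \dot x_\delta^{<\omega})^V$ modulo some ideal $\dot I \in V(x_\delta)$. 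Composing the two stages produces the desired forcing $\QQ = x_\delta^{<\omega} \ast B(\PP \ast \dot x_\delta^{<\omega})^V / \dot I$ inside $V(x_\delta)$.

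The dense map $f$ is then built by the explicit formula $\tup{c, p, y} \mapsto \tup{c, \tup{\check p, \tau(y)}_I}$ on $x_\delta^{<\omega} \times \PP \times \power^\delta(\PP^{<\omega})^V$, where $\tau \colon \power^\delta(\PP^{<\omega})^V \to \dot x_\delta$ is the name-decoding surjection defined as in the proof of \autoref{thm:kwp-down-abs}, and $\tup{\cdot}_I$ denotes the equivalence class modulo $\dot I$. Density is straightforward to check coordinate by coordinate, and since the domain is manifestly in bijection inside $V(x_\delta)$ with $x_\delta$ itself, this provides the required $f$. The main obstacle will be the clean justification of the two-step decomposition of $V[G \ast H]$ over $V(x_\delta)[H]$: the whole reason for introducing $H$ is to well-order $x_\delta$ in the intermediate stage, which is the natural place to carry out the Boolean-algebraic analysis that identifies the quotient by $\dot I$, something not directly available over $V(x_\delta)$ alone.
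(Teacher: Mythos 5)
Your argument is essentially correct, but it takes a different route from the paper's. The paper obtains this corollary immediately from \autoref{thm:KWC}: every intermediate $M\models\KWP_\alpha^*$ has the form $V(y)$ for some $y\subseteq\power^\alpha(\eta_\alpha)$, where $\eta_\alpha$ is a single ordinal defined in $V[G]$, and since all such $y$ are elements of the fixed set $\power^{\alpha+1}(\eta_\alpha)^{V[G]}$, there are only set-many candidates for $M$. What you have written is instead, almost verbatim, the paper's proof of the separate theorem that $M=V(\power^{\alpha+1}(\PP^{<\omega})^M)$ for any intermediate model of $\KWP_\alpha^*$ (the two-step decomposition via $H\subseteq x_\delta^{<\omega}$, the ideal $\dot I$, and the dense map into the quotient). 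That theorem does yield the corollary, and it even buys you something the paper's route does not: the hypothesis $V\models\KWP$ becomes unnecessary, since your classification works over any ground model of $\ZF$. However, you stop at ``$M=V(\power^{\alpha+1}(\PP^{<\omega})^M)$, as required'' without actually drawing the counting conclusion, which is the statement you were asked to prove. You still need the (easy but not omittable) final step: the assignment $M\mapsto\power^{\alpha+1}(\PP^{<\omega})^M$ is injective on intermediate models of $\KWP_\alpha^*$ by your classification, and since $M\subseteq V[G]$ one checks by induction on $\delta$ that $\power^\delta(\PP^{<\omega})^M\subseteq\power^\delta(\PP^{<\omega})^{V[G]}$, so every value of this assignment lies in the fixed set $\power\bigl(\power^{\alpha+1}(\PP^{<\omega})^{V[G]}\bigr)^{V[G]}$; hence there are only set-many such $M$. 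With that sentence added, your proof is complete and strictly more general than the paper's derivation.
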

As not every model of $\ZF$ satisfies $\KWP$, one has to wonder about the necessity of the $\KWP$ assumption in \autoref{thm:KWC}. It turns out that we can relativise the notion of $\KWP$ principles to any class (not necessarily a definable class, as working in $\ZF$ the problem of ground model definability remains wide open at this time).
\begin{definition}
  Suppose that $V\subseteq W$ is a class. We write $W\models\KWP_\alpha(V)$ to mean that for every $x\in W$ there is some $\alpha$ such that $x$ injects into $\power^\alpha(V)$. Namely, there is some $y\in V$ such that $x\leq\power^\alpha(y)$. We define $\KWP_\alpha^*(V)$ using surjections, and we omit the subscripts using the same convention as before.
\end{definition}
In this generalised notion, $\KWP$ is simply $\KWP(\Ord)$. It is also not hard to see that if $G$ is a $V$-generic filter, then $V[G]\models\KWP_0^*(V)$, and much of the basic properties of $\KWP$ transfer almost directly to this relativised case. Note that $V[G]\models\KWP_0(V)$ can very well be false, since that would imply that if $V\models\KWP_\alpha$, then $V[G]\models\KWP_\alpha$, but as we mentioned, Monro proved in \cite{Monro:Generic} that $\KWP_1$ can be violated generically, so $\KWP_0(V)$ must fail in that generic extension.

\begin{definition}
  Suppose that $V\subseteq W$ is a class. We say that $W\models\SVC_V(X)$ if for every set $A$ there is some $e\in V$ and a surjection $f\colon X\times e\to A$. We write $W\models\SVC_V$ to mean that there exists some $X$ such that $\SVC_V(X)$ holds.
\end{definition}
Much like in the case of $\SVC$, $W\models\SVC_V$ if and only if $\KWP_0^*(V)$ can be forced over $W$. We can also define the injective version of $\SVC_V$ as well, however, it is not at all true that the two are equivalent for an arbitrary $V$.

\begin{theorem}\label{thm:general-form}
  Suppose that $V\subseteq M\subseteq V[G]$ are models of $\ZF$ where $G$ is a $V$-generic filter. Then the following are equivalent:
  \begin{enumerate}
  \item $M=V(x)$ for some $x\in V[G]$.
  \item $M$ is a symmetric extension of $V$.
  \item $M\models\KWP^*(V)$.
  \item $M\models\SVC_V$.
  \end{enumerate}
\end{theorem}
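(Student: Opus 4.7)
The plan is to establish the four-way equivalence via the three equivalences $(1) \Leftrightarrow (4)$, $(1) \Leftrightarrow (3)$, and $(1) \Leftrightarrow (2)$, each of which relativizes a result already proved in the paper.

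For $(1) \Rightarrow (4)$, given $M = V(x)$, I would adapt the recursive surjection construction of \autoref{prop:rel-KWP} to produce, for each $L_\xi(V_\delta, x)$ in the constructible hierarchy over $V$ and $x$, a surjection from $\tcl(\{x\}) \times e_{\delta,\xi}$ with $e_{\delta,\xi} \in V$, yielding $M \models \SVC_V(\tcl(\{x\}))$. Conversely, for $(4) \Rightarrow (1)$, given $M \models \SVC_V(X)$, the inclusion $V(X) \subseteq M$ is trivial; for the reverse, apply $\SVC_V(X)$ to $\tcl(\{A\})$ for each $A \in M$ to obtain $e \in V$ with $\tcl(\{A\}) \leq \power(X \times e)$, express the $\in$-relation on $\tcl(\{A\})$ as a well-founded extensional subset of $\power(X \times e)^2 \in V(X)$, and apply the Mostowski collapse inside $V(X)$ (as in the proof of \autoref{thm:BVM}) to recover $A \in V(X)$.

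For the equivalence $(1) \Leftrightarrow (3)$, the direction $(1) \Rightarrow (3)$ is \autoref{prop:rel-KWP} with $\Ord$ replaced by $V$, using $V[G] \models \KWP_0^*(V)$ to initialize the induction with $x$. For $(3) \Rightarrow (1)$, I would run the argument of the theorem immediately preceding our target in its $V$-relativized form: $M \models \KWP^*_\alpha(V)$ gives $M = V(\power^{\alpha+1}(V)^M)$ by a relativized Balcar--Vop\v{e}nka--Monro argument, and then setting $x_\delta = \power^\delta(\PP^{<\omega})^M$ and arguing by induction on $\delta$ (with \autoref{lemma:cBA-subset} in the successor step) yields $V(\power^\delta(V)^M) = V(x_\delta)$, so $x = x_{\alpha+1}$ witnesses $(1)$.

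Finally, $(1) \Leftrightarrow (2)$ is the relativization to $V \models \ZF$ of the $(3) \Leftrightarrow (4)$ portion of Blass--Usuba (\autoref{thm:BlassUsuba}): every symmetric extension of $V$ has the form $V(H)$ where $H$ is the symmetric filter, and conversely, any $M = V(x) \subseteq V[G]$ is realized as a symmetric extension by a standard construction of an automorphism group of $\PP$ and a normal filter of subgroups controlling a name for $x$. The main obstacle, in my view, is $(3) \Rightarrow (1)$, where the proper-class-to-set reduction must be verified to transfer when $\Ord$ is replaced by $V$ throughout the proof of the preceding theorem; in particular, one must check that the surjection onto the requisite complete Boolean algebra in each successor step can be found within $V(x_\delta)$ and is witnessed by iterated powersets of $\PP^{<\omega}$.
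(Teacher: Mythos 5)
Your overall architecture is close to the paper's: the heavy lifting is in $(3)\Rightarrow(1)$ via the relativised machinery of \autoref{thm:KWC} (equivalently, the relativised version of the theorem on $V(\power^{\alpha+1}(\PP^{<\omega})^M)$, using \autoref{lemma:cBA-subset} at successor steps), $(1)\Leftrightarrow(2)$ is Grigorieff, and $(1)\Rightarrow(4)$ is essentially by definition of $V(x)$ (modulo the minor point that the witness should be something like $\tcl(\{x\})^{<\omega}$ rather than $\tcl(\{x\})$, since the interpretation maps need finite tuples of parameters and $\ZF$ does not give a surjection of $\tcl(\{x\})\times e$ onto its own finite sequences). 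These parts are fine.

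The genuine problem is your direct argument for $(4)\Rightarrow(1)$, which purports to show $M=V(X)$ for the $\SVC_V$ witness $X$ itself. That conclusion is false in general: take $V=L$, let $G$ be Cohen-generic, and $M=L[G]$. Then $M\models\SVC_L(X)$ with $X=\{\varnothing\}$ (every $A\in L[G]$ is the image of a name $e\in L$ under the evaluation map, which lives in $L[G]$), yet $L(X)=L\neq L[G]$. The step that fails is precisely ``express the $\in$-relation on $\tcl(\{A\})$ as a well-founded extensional subset of $\power(X\times e)^2\in V(X)$ and collapse inside $V(X)$'': the coding relation is defined from the surjection $f\in M$, so while it is a \emph{subset} of a set belonging to $V(X)$, there is no reason it is an \emph{element} of $V(X)$ --- inner models are not closed under taking arbitrary subsets of their elements, and the hypothesis of \autoref{thm:BVM} that the relevant iterated power sets of the two models coincide is exactly what is missing here. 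As written, your cycle then has no implication leading out of $(4)$, so the four-way equivalence is not established. The repair is what the paper does: prove $(4)\Rightarrow(3)$ instead, exactly as in \autoref{prop:SVCtoKWP} (any set $X\in M$ satisfies $X\subseteq M_\rho\subseteq\power^\rho(V)^M$ for some $\rho$, so $\SVC_V(X)$ yields $\KWP_\rho^*(V)$), and let your $(3)\Rightarrow(1)$ close the loop; the reduction from the class $\power^{\alpha+1}(V)^M$ to an actual set genuinely requires the forcing argument and cannot be obtained from the Mostowski-collapse coding alone.
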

\begin{proof}
The equivalence between (1) and (2) had been established by Grigorieff \cite{Grigorieff:Intermediate}. The implication from (1) to (4) follows directly from the definitions of $V(x)$ and $\SVC_V$. The proof that (4) implies (3) has a similar proof to \autoref{prop:SVCtoKWP}. Finally, the proof of \autoref{thm:KWC} translates directly\footnote{Mutantis mutandi.} to this case, establishing the implication from (3) to (1).
\end{proof}

Theorem~1.3 in \cite{HayutShani} argues that in their Bristol-like models, $M$, $\KWP$ and $\SVC$ are both false. As a consequence of \autoref{thm:general-form} we actually have that the two failures go hand-in-hand when $V\models\ZFC$. And indeed, $\KWP^*(V)$ fails if and only if $\SVC_V$ fails.
\subsection*{Acknowledgements}
The authors would like to thank Elliot Glazer for suggesting the concept of $\KWP^*$, as well as the anonymous referee for their helpful suggestions. The authors were supported by a UKRI Future Leaders Fellowship [MR/T021705/2]. No data are associated with this article. For the purpose of open access, the authors have applied a CC-BY licence to any Author Accepted Manuscript version arising from this submission.
\bibliographystyle{amsplain}
\providecommand{\bysame}{\leavevmode\hbox to3em{\hrulefill}\thinspace}
\providecommand{\MR}{\relax\ifhmode\unskip\space\fi MR }
\providecommand{\MRhref}[2]{%
  \href{http://www.ams.org/mathscinet-getitem?mr=#1}{#2}
}
\providecommand{\href}[2]{#2}

\end{document}